\newtheorem{lemma}{Lemma}
\newtheorem{theorem}[lemma]{Theorem}
\newtheorem{proposition}[lemma]{Proposition}
\newtheorem{corollary}[lemma]{Corollary}
\begin{document}

\title{\textsf{Kakutani's fixed point theorem in constructive mathematics}}

\author[Hendtlass]{Matthew Hendtlass}
 \address{School of Mathematics and Statistics,
    University of Canterbury,
    Christchurch 8041,
    New Zealand}
\email{matthew.hendtlass@canterbury.ac.nz}

\begin{abstract}
\noindent
\textsf{In this paper we consider Kakutani's extension of the Brouwer fixed point theorem within the framework of Bishop's constructive mathematics. Kakutani's fixed point theorem is classically equivalent to Brouwer's fixed point theorem. The constructive proof of (an approximate) Brouwer's fixed point theorem relies on a finite combinatorial argument; consequently we must restrict our attention to uniformly continuous functions. Since Brouwer's fixed point theorem is a special case of Kakutani's, the mappings in any constructive version of Kakutani's fixed point theorem must also satisfy some form of uniform continuity. We discuss the difficulties involved in finding an appropriate notion of uniform continuity, before giving a constructive version of Kakutani's fixed point theorem which is classically equivalent to the standard formulation. We also consider the reverse constructive mathematics of Kakutani's fixed point theorem, and provide a constructive proof of Kakutani's first application of his theorem---a generalisation of von Neumann's minimax theorem. }
\end{abstract}%


\maketitle

\section{Introduction}

\noindent
In this paper we give a constructive treatment of Kakutani's generalisation \cite{kak} of the Brouwer fixed point theorem.

\bigskip
\noindent
Exactly what do we mean by `constructive mathematics'? This paper is set in the rigorous but informal framework of Bishop's constructive mathematics (\textbf{BISH}). Bishop's constructive mathematics is essentially mathematics with intuitionistic logic and dependent choice\footnote{For an introduction to the practice of Bishop's constructive mathematics see \cite{BB,BR,BV}; see \cite{AR,B-MST,F,M} 
for a constructive alternative to ZFC.}. Working with intuitionistic logic ensures that proofs proceed in a manner which preserves computational content: a proof of $A\Rightarrow B$ converts a witness of $A$ into a witness of $B$. In particular, a constructive proof of $\exists_xP(x)$ embodies an algorithm for the construction of an object $x$, and an algorithm verifying that $P(x)$ holds. In this manner, constructive mathematics can be viewed as a high level programming language.

\bigskip
\noindent
The key aspect: Any proof in Bishop's constructive mathematics contains an algorithm which `implements' the result it proves. So if the conclusion of a theorem has some computational content, then so must the hypothesis: we cannot get something for nothing! To get constructively meaningful results, it is therefore necessary to rewrite many classical definitions in a positive way; we must also at times make explicit conditions which hold trivially in classical mathematics, but which may fail in our framework. At still other times, a classical definition given a constructive reading becomes a far stronger property, so here we must adopt a computationally weaker, though classically equivalent alternative. 

\bigskip
\noindent
A set $S$ is said to be \emph{inhabited} if there exists $x$ such that $x\in S$. An inhabited subset $S$ of a metric space $X$ is \emph{located} if for each $x\in X$ the \emph{distance} 
\begin{equation*}
\rho \left( x,S\right) =\inf \left\{ \rho (x,s):s\in S\right\}
\end{equation*}%
\noindent from $x$ to $S$ exists. An $\varepsilon $%
\emph{-approximation} to $S$ is a subset $T$ of $S$ such
that for each $s\in S$, there exists $t\in T$ such that $\rho
(s,t)<\varepsilon $. We say that $S$ is \emph{totally bounded} if for each $%
\varepsilon >0$ there exists a finitely enumerable\footnote{A set is \emph{finitely enumerable} if it is the image of $\{1,\ldots,n\}$ for some $n\in N$, and a set is \emph{finite} if it is in bijection with $\{1,\ldots,n\}$ for some $n\in N$; constructively these notions are distinct.} $\varepsilon $-approximation to $S$. A metric space $X$ is \emph{compact} if it is complete and totally bounded. We will use $\rho$ to represent any metric, and we write $B(x,r)$ for the ball centered on $x$ with radius $r$.

\bigskip
\noindent
Let $U$ be a function from a metric space $X$ into the class $\mathcal{P}^*(X)$ of nonempty subsets of $X$; $U$ is said to be a \emph{set valued mapping on} $X$. We say that $U$ is \emph{convex} (\emph{compact}, \emph{closed}, etc.) if $U(x)$ is convex (compact, closed, etc.) for each $x\in X$. A mapping $U:X\rightarrow\mathcal{P}^*(X)$ is said to be \emph{sequentially upper hemi-continuous} if for each pair of sequences $\left(x_n\right)_{n\geqslant1}$, $\left(y_n\right)_{n\geqslant1}$ in $X$ converging to points $x,y$ in $X$ respectively, if $y_n\in U\left(x_n\right)$ for each $n$, then $\rho(y,U(x))=0$; in particular, if $U$ is closed, then $y\in U(x)$. If $U$ is closed, then $U$ is sequentially upper hemi-continuous if and only if the \emph{graph} 
 $$
  G(U)=\bigcup_{x\in X}\{x\}\times U(x)
 $$
\noindent
of $U$ is closed. A point $x\in S$ such that $x\in U(x)$ is called a \emph{fixed point of} $U$. Kakutani's fixed point theorem is the following.

\begin{quote}
\textbf{Kakutani's fixed point theorem} Let $S$ be a compact convex subset of $\mathbf{R}^n$ and let $U:S\rightarrow\mathcal{P}^*(S) $ be a closed, convex, sequentially upper hemi-continuous mapping. Then $U$ has a fixed point.
\end{quote}

\bigskip
\noindent
If $f$ is sequentially continuous, then $U(x)=\{f(x)\}$ is sequentially upper hemi-continuous; this shows that Kakutani's fixed point theorem is a generalisation of Brouwer's fixed point theorem. Since Brouwer's fixed point theorem (for sequential,\footnote{For uniformly continuous functions, this follows from \textbf{WKL} (see below)---which is equivalent to \textbf{LLPO}---and the constructive approximate Brouwer fixed point theorem. For a sequentially continuous function $f$, we first approximate $f$ by a sequence $\left(f_n\right)_{n\geqslant1}$ of affine, and therefore uniformly continuous, functions to which, given \textbf{LLPO}, we apply Brouwer's fixed point theorem for uniformly continuous functions to produce a sequence of points $\left(x_n\right)_{n\geqslant1}$ such that $x_n=f_n(x_n)$ for each $n$. By \textbf{WKL}, we may suppose that $\left(x_n\right)_{n\geqslant1}$ converges to some point $x$. The sequential continuity of $f$ then ensures that $x$ is a fixed point of $f$.} pointwise, or uniformly continuous functions) is equivalent to the lesser limited principle of omniscience,
  \begin{quote}
   \textbf{LLPO}: For each binary sequence $\left(a_n\right)_{n\geqslant1}$ with at most one non-zero term, either $a_n=0$ for all even $n$ or $a_n=0$ for all odd $n$,
 \end{quote}
\noindent
Kakutani's fixed point theorem implies \textbf{LLPO}, and hence is not constructively valid.

\bigskip
\noindent
Classically, the Kakutani fixed point theorem is equivalent to the Brouwer fixed point theorem (in the sense that it is straightforward to prove one given the other). The constructive proof of (an approximate) Brouwer fixed point theorem, for a uniformly continuous function $f$ from $[0,1]^n$ into itself, uses a combinatorial argument to show that for all $\delta,\varepsilon>0$ either there exists $x,y\in[0,1]^n$ such that $\rho(x,y)<\delta$ and $\rho(f(x),f(y))>\varepsilon$, or we can construct $x\in[0,1]^n$ such that $\rho(x,f(x))<\varepsilon$. Given $\varepsilon>0$, the former possibility is then ruled out by using the uniform continuity of $f$ to choose an appropriate $\delta$. It is clear that $f$ must satisfy some uniform form of continuity\footnote{This in fact only requires the constructively weaker notion of sequential uniform continuity (see \cite{FPT}).} for this approach to work. If we are to prove a constructive version of Kakutani's fixed point theorem using the constructive approximate Brouwer fixed point theorem, we must therefore have our set valued mappings satisfy some form of uniform continuity. 

\bigskip
\noindent
In the next section we discuss the difficulties of formulating an appropriate notion of uniform continuity for set valued mappings, and hence a constructive version of the Kakutani fixed point theorem. We then give our constructive version of Kakutani's fixed point theorem; this result has, classically, both weaker hypothesis and a weaker conclusion---we only construct approximate fixed points---than the classical version, but is classically equivalent to the standard formulation. The final section gives a constructive proof of Kakutani's first application of his theorem: a generalisation of von Neumann's minimax theorem.

\section{Kakutani's fixed point theorem}
\subsection{The basic result}

\noindent
Our first question is: what is the constructive content of the standard classical proofs of Kakutani's fixed point theorem?

\bigskip
\noindent
As we saw before, Kakutani's fixed point theorem implies \textbf{LLPO}; thus any classical proof of Kakutani's fixed point theorem must be non-constructive. On the other hand, Kakutani's original proof of his theorem only requires (several applications of) weak K\"{o}nig's lemma,
 \begin{quote}
  \textbf{WKL} Every infinite tree has an infinite path.
 \end{quote}
\noindent
in addition to constructive methods. Since \textbf{LLPO} and \textbf{WKL} are constructively equivalent (see \cite{Ish90}---this requires a weak form of countable choice), we have the following result.

\begin{theorem}
\label{Kak_LLPO}
Kakutani's fixed point theorem is equivalent to \textbf{LLPO}.
\end{theorem}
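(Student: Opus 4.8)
The plan is to prove the two implications separately. For the direction \textbf{LLPO} $\Rightarrow$ Kakutani's fixed point theorem, I would follow Kakutani's original argument and check that it uses nothing beyond \textbf{BISH} plus \textbf{WKL} (equivalently \textbf{LLPO}). Concretely: given a compact convex $S\subseteq\mathbf{R}^n$ and a closed, convex, sequentially upper hemi-continuous $U:S\to\mathcal{P}^*(S)$, one triangulates $S$ at mesh $1/k$, picks a vertex image $y_v\in U(v)$ for each vertex $v$ (using dependent choice), extends affinely over each simplex to get a uniformly continuous single-valued $f_k:S\to S$, and applies the constructive (exact) Brouwer fixed point theorem for uniformly continuous maps — which is available under \textbf{LLPO} as explained in the footnote of the excerpt — to get $x_k$ with $f_k(x_k)=x_k$. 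By total boundedness of $S$ and \textbf{WKL} one extracts a convergent subsequence $x_{k_j}\to x$. One then unwinds the affine interpolation: $x_{k_j}$ lies in a simplex with vertices $v_0,\dots,v_n$ (each within $1/k_j$ of $x_{k_j}$) and $x_{k_j}=\sum_i \lambda_i y_{v_i}$ with $\sum_i\lambda_i=1$, $\lambda_i\geqslant0$; passing to a further subsequence so the barycentric coordinates converge and each $y_{v_i}$ converges (again by compactness and \textbf{WKL}), sequential upper hemi-continuity gives each limit in $U(x)$, and convexity of $U(x)$ together with closedness gives $x\in U(x)$.

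For the converse, Kakutani's fixed point theorem $\Rightarrow$ \textbf{LLPO}, the cleanest route is to invoke what the excerpt already records: Kakutani's theorem contains Brouwer's fixed point theorem as the special case $U(x)=\{f(x)\}$ for sequentially continuous $f$, and the excerpt states that Brouwer's fixed point theorem (for sequentially continuous functions) is equivalent to \textbf{LLPO}. Hence it suffices to cite that equivalence. If one wants a self-contained derivation, one can build, from a binary sequence $(a_n)$ with at most one nonzero term, the standard piecewise-linear self-map of $[-1,1]$ (or of a square) whose only possible fixed points force a decision about whether the nonzero term, if any, is at an even or odd index, and feed it through Kakutani as a singleton-valued map; the existence of a fixed point then yields the \textbf{LLPO} disjunction.

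The main obstacle I anticipate is purely a matter of constructive bookkeeping in the first direction: ensuring that every choice made (vertex images, subsequence extractions, convergent barycentric coordinates) is legitimate in \textbf{BISH}, i.e. uses only dependent choice and \textbf{WKL}, and that the combinatorial/triangulation step and the passage to the limit do not smuggle in a stronger omniscience principle. In particular one must be careful that $f_k$ is genuinely uniformly continuous (this is automatic for a piecewise-affine map on a finite triangulation) so that the \textbf{LLPO}-backed Brouwer theorem applies, and that the final appeal to sequential upper hemi-continuity is set up with honestly convergent sequences $(x_{k_j})$ and $(y_{v_i}^{(j)})$. None of these steps is deep, but getting the logical strength exactly right — no more than \textbf{WKL} — is the crux, and it is exactly what makes the equivalence with \textbf{LLPO}, rather than a mere implication, go through, using the result of \cite{Ish90} that \textbf{WKL} and \textbf{LLPO} coincide constructively.
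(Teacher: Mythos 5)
Your proposal follows the paper's argument essentially exactly: both directions (Kakutani's original triangulate--interpolate--extract-convergent-subsequences argument for \textbf{LLPO} $\Rightarrow$ Kakutani, and the ``Brouwer is the singleton-valued special case'' observation for Kakutani $\Rightarrow$ \textbf{LLPO}) match what the paper does. The one piece of bookkeeping you flag only in general terms that the paper spells out explicitly is that locating the Brouwer fixed point $x_k$ in a particular mesh cell is itself an \textbf{LLPO} step (one must decide $a\leqslant 0$ or $a\geqslant 0$ coordinatewise), and the paper works on $[0,1]^n$ with a grid and then reduces a general compact convex $S$ to the hypercube via the uniformly continuous nearest-point projection onto $S$, rather than triangulating $S$ directly.
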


\begin{proof}
Suppose \textbf{LLPO} holds. We give Kakutani's original proof, adapted to the unit hypercube. Let $U$ be a sequentially upper hemi-continuous set valued mapping on $[0,1]^n$. For each $k\in\mathbf{N}$ let $f_k$ be the affine extension of a function on
 $$
  \{0,1/k,\ldots,1\}^n
 $$
\noindent
which takes values in $U(x)$ for each $x$ in its domain. Since \textbf{LLPO} implies Brouwer's fixed point theorem, we can construct a sequence $\left(x_k\right)_{k\geqslant1}$ such that $x_k=f_k\left(x_k\right)$ for each $k\in\mathbf{N}$; by \textbf{WKL} we may suppose that $\left(x_k\right)_{k\geqslant1}$ converges to some point $x_0\in[0,1]^n$. Since \textbf{LLPO} allows us to decide whether $a\leqslant0$ or $a\geqslant0$ for each $a\in\mathbf{R}$, for each $k\in\mathbf{N}$ there exists $s\in S_k$ such that
 $$
  x_k\in \{x\in[0,1]^n: 0\leqslant x_i-s_i\leqslant 1/k,1\leqslant i\leqslant n\}\equiv\{x_1^k,\ldots,x_{2^n}^k\}.
 $$
\noindent
Let $\lambda^k_1,\ldots,\lambda^k_{2^n}$ ($k\in\mathbf{N}$) be such that $\lambda_i^k\geqslant0$ for each $i$, $\sum_{i=1}^{2^n}\lambda_i^k=1$, and
 $$
  x_k=\sum_{i=1}^{2^k}\lambda_i^kx_i^k.
 $$
\noindent
Set $y_i^k=f_k(x_i^k)$ for each $k\in\mathbf{N}, 1\leqslant i\leqslant 2^k$. Then $y_i^k\in U(x_i^k)$ for all $i,k$ and 
 $$
  x_k=f_k(x_k)=\sum_{i=1}^{2^k}\lambda_i^ky_i^k.
 $$
\noindent
Applying \textbf{WKL} repeatedly, we may assume that for each $1\leqslant i\leqslant 2^n$ there exist sequences $\left(\lambda_i^k\right)_{k\geqslant1},$ $\left(y_i^k\right)_{k\geqslant1}$ such that $\left(\lambda_i^k\right)_{k\geqslant1}$ converges to $\lambda_i^0$ in $\mathbf{R}$ and $\left(y_i^k\right)_{k\geqslant1}$ converges to $y_i^0$ in $[0,1]^n$. Then $\lambda_i^0\geqslant0$ for each $i$, $\sum_{i=1}^{2^n}\lambda^0_i=1$, and 
 $$
  x_0=\sum_{i=0}^{2^n}\lambda_i^0 y_i^0.
 $$
\noindent
Moreover, $x_i^k\rightarrow x_0$, $y_i^k\rightarrow y_i^0$, and $y_i^k\in U(x_i^k)$ for each $i$; whence, since $U$ is closed and upper hemi-continuous, $y_i^0\in U(x_0)$ for each $i$. It now follows from the convexity of $U$ that 
 $$
  x_0=\sum_{i=0}^{2^n}\lambda_i^0 y_i^0\in U(x_0);
 $$
\noindent
that is, $x_0$ is a fixed point of $U$.

\bigskip
\noindent
We extend this to a closed convex sequentially upper hemi-continuous mapping $U$ on an arbitrary convex compact subset $S$ of $\mathbf{R}^n$ as follows. Let $Q$ be the uniformly continuous function from $\mathbf{R}^n$ into $S$ which takes a point $x$ of $\mathbf{R}^n$ to the (unique) closest point to $x$ in $S$; this function exists by Theorem 6 of \cite{BRS}, and is shown to be uniformly continuous in \cite{FPT} (Theorem 3). We may suppose, without loss of generality, that $S$ is contained in the unit hypercube. Define a set valued mapping $U^\prime$ on the unit hypercube by setting
 $$
  U^\prime(x)=U(Q(x)).
 $$
\noindent
It is easy to see that $U^\prime$ is closed, convex, and sequentially upper hemi-continuous; whence, since $S$ is closed and $U^\prime$ maps into $S$, there exists $x\in S$ such that $x\in U^\prime(x)$. Since $x=Q(x)$, $x$ is also a fixed point of $U$.
\end{proof}

\bigskip
\noindent 
Intuition may suggest that the functions $\left(f_k\right)_{k\geqslant1}$ become closer and closer to $U$ in some sense, and hence that Kakutani's proof contains a proof of the existence of approximate fixed points---a set valued mapping $U$ on a metric space $X$ has \emph{approximate fixed points} if for each $\varepsilon>0$ there exists $x\in X$ such that $\rho(x,U(x))<\varepsilon$; such an $x$ is called an $\varepsilon$-\emph{fixed point of} $U$. However, in order to construct approximate fixed points, we must be able to quantify the `convergence' of these affine approximations. Our eventual solution to finding a constructive Kakutani fixed point theorem is to restrict our mapping $U$ in such a way as to ensure that for each $\varepsilon>0$ there exists an affine function contained in an $\varepsilon$-expansion of the graph of $U$.

\bigskip
\noindent
We are ready to begin our journey toward a constructive Kakutani fixed point theorem. The constructive treatment of Brouwer's fixed point theorem suggests that we take the following route:
 \begin{itemize}
  \item[(i)] we should recast upper sequential hemi-continuity as a pointwise property;
  \item[(ii)] we should further consider a uniform notion of upper sequential hemi-continuity;
  \item[(iii)] we should focus on approximate fixed points.
 \end{itemize}
\noindent
It is also natural to insist that the image of each point be a located set; but, working constructively, this severly restricts the set valued mappings we can define. For example, in order to prove one direction of Proposition \ref{LPO}, we have in mind the function $U:[0,1]\rightarrow\mathcal{P}^*([0,1])$ given by
 $$
  U(x)=\left\{\begin{array}{lll}
               \{0\} & & x<1/2\\
               {\white }[0,1] & & x=1/2\\
               \{1\} & & x>1/2.
              \end{array}\right.
 $$
\noindent
However, $U$ is only defined on the subset\footnote{The author would like to thank the referee for pointing this out.} $[0,1/2)\cup\{1/2\}\cup(1/2,1]$ of $[0,1]$, which equals $[0,1]$ only in the presence of the highly nonconstructive limited principle of omniscience
 \begin{quote}
  \textbf{LPO}: For every binary sequence $\left(a_n\right)_{n\geqslant1}$ either $a_n=0$ for all $n$, or there exists $n$ such that $a_n=1$.
 \end{quote}
\noindent
We can overcome this problem by defining a set valued mapping from its graph: set $G$ to be the subset of $[0,1]^2$ given by $[(0,0),(1/2,0)]\cup[(1/2,0),(1/2,1)]\cup[(1/2,1),1,1)]$---where $[x,y]=\{tx+(1-t)y:t\in[0,1]\}$---and let $U(x)=\{y\in[0,1]:(x,y)\in G\}$.

\begin{figure}[htp]
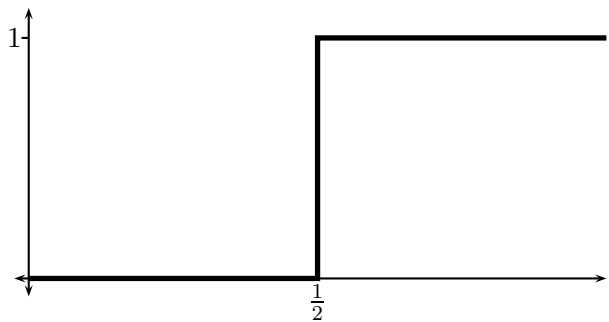

\begin{center}
\pspicture(1,-.5)(6.8,4)
\psset{unit=.8}
\psset{xunit=1.2}

\psline{<->}(-.2,0)(8,0)
\psline{<->}(0,-.3)(0,4.5)

\psline(0,4)(-.1,4)

\psline[linewidth=2pt](0,0)(4.0,0)(4,4.0)(8,4)

\rput[r](-.1,4){$1$}
\rput[t](4,-.1){$\frac{1}{2}$}

\endpspicture
\end{center}
\caption{The graph $G$ of $U:[0,1]\rightarrow\mathcal{P}^*([0,1])$.}\label{fig:U}
\end{figure}

\noindent
Note that in general $U(x)$ need not be located or even inhabited, but it is nonempty.

\bigskip
\noindent
We say that a mapping $U:X\rightarrow\mathcal{P}^*(X)$ is \emph{pointwise upper hemi-continuous} if for each $x\in X$ and each $\varepsilon>0$ there exists $\delta>0$ such that for all $y\in X$, if $\Vert x-y\Vert<\delta$, then
 $$
  U(y)\subset \left(U(x)\right)_\varepsilon,
 $$
\noindent
where, for a subset $S$ of a metric space $X$ and a positive real number $\varepsilon$,
 $$
  S_\varepsilon=\left\{x\in X: \exists_{s\in S}\rho(x,s)<\varepsilon\right\}.
 $$
 
\bigskip
\noindent  
If $U$ is pointwise upper hemi-continuous, then $U$ is sequentially upper hemi-continuous. Suppose that $U$ is a pointwise upper hemi-continuous function and let $\left(x_n\right)_{n\geqslant1}$, $\left(y_n\right)_{n\geqslant1}$ be sequences in $X$ converging to points $x$ and $y$ of $X$, respectively, such that $y_n\in U\left(x_n\right)$ for each $n$. Fix $\varepsilon>0$ and let $\delta>0$ be such that $U(z)\subset \left(U(x)\right)_{\varepsilon/2}$ for all $z\in B(x,\delta)$. Pick $N>0$ such that $\rho(x_n,x)<\delta$ and $\rho(y_n,y)<\varepsilon/2$ for all $n\geqslant N$. Then for each $n\geqslant N$ we have
 $$
  y_n\in U(x_n)\subset (U(x))_{\varepsilon/2},
 $$
\noindent
so
\begin{eqnarray*}
 \rho(y,U(x)) & \leqslant & \rho(y,y_n)+\rho(y_n,U(x))\\
    & < & \varepsilon/2 + \varepsilon/2 \ = \ \varepsilon.
\end{eqnarray*}
\noindent
Since $\varepsilon>0$ is arbitrary, $\rho(y,U(x))=0$.

\begin{proposition}
\label{LPO}
 The statement
 \begin{quote}
  Every sequentially upper hemi-continuous mapping with a separable graph is pointwise upper hemi-continuous.
 \end{quote}
\noindent
is equivalent to \textbf{LPO}.
\end{proposition}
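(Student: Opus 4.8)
The plan is to prove the two implications separately. From the displayed statement I will read off \textbf{LPO} by testing it on one concrete mapping; from \textbf{LPO} I will recover the statement by using the extra decidability \textbf{LPO} affords to turn the ``for all $y$'' in pointwise upper hemi-continuity into a search over a countable dense subset of the graph.

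For the implication to \textbf{LPO} I will use the mapping $U:[0,1]\to\mathcal{P}^*([0,1])$ built from the graph of Figure~\ref{fig:U}, that is, $U(x)=\{y\in[0,1]:(x,y)\in G\}$ with $G=[(0,0),(1/2,0)]\cup[(1/2,0),(1/2,1)]\cup[(1/2,1),(1,1)]$. Since $G$ is the image of $[0,3]$ under a uniformly continuous path it is compact, hence closed and separable; each fibre $U(x)$ is then closed, and since $G(U)=G$ the remark made earlier---that a closed mapping is sequentially upper hemi-continuous exactly when its graph is closed---shows that $U$ is sequentially upper hemi-continuous. So $U$ meets the hypotheses of the displayed statement, and is therefore pointwise upper hemi-continuous. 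Now, given a binary sequence $(a_n)_{n\geqslant1}$, put $\alpha=\tfrac12-\sum_{n\geqslant1}a_n4^{-n}$, so that $\alpha\leqslant\tfrac12$ with equality if and only if $a_n=0$ for all $n$. Apply pointwise upper hemi-continuity of $U$ at $\alpha$ with $\varepsilon=\tfrac12$ to obtain $\delta>0$ with $U(y)\subseteq(U(\alpha))_{1/2}$ whenever $|y-\alpha|<\delta$; choose $N$ with $4^{-N}/3<\delta$ and inspect $a_1,\dots,a_N$. If $a_n=1$ for some $n\leqslant N$ we are done. Otherwise $|\tfrac12-\alpha|\leqslant 4^{-N}/3<\delta$, so $U(1/2)\subseteq(U(\alpha))_{1/2}$; since $1\in U(1/2)=[0,1]\subseteq(U(\alpha))_{1/2}$ there is $s\in U(\alpha)$ with $s>1/2$, and the description of $G$ then forces $\alpha\geqslant1/2$, hence $\alpha=1/2$, hence $a_n=0$ for all $n$. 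This decides the sequence, which is \textbf{LPO}.

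For the converse, assume \textbf{LPO}; then the order on $\mathbf{R}$ is decidable, and every bounded sequence in $\mathbf{R}^n$ has a convergent subsequence (the mappings of interest take values in a compact subset of $\mathbf{R}^n$). Let $U$ be sequentially upper hemi-continuous with a countable dense subset $\{(a_k,b_k):k\geqslant1\}$ of $G(U)$, and fix $x$ and $\varepsilon>0$. For each $n$ the assertion ``there is $k$ with $\|a_k-x\|<1/n$ and $b_k$ at distance at least $\varepsilon/2$ from $U(x)$'' can, using the dense sequence, be put into an essentially arithmetic form, so \textbf{LPO} decides it; \textbf{LPO} likewise decides whether this holds for every $n$ or fails for some $n$. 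If it fails for some $n$, a short argument from the density of $\{(a_k,b_k)\}$ in $G(U)$ shows that $\delta=1/(2n)$ witnesses pointwise upper hemi-continuity at $x$ for $\varepsilon$. If it holds for every $n$, then dependent choice produces sequences $y_n\to x$ and $z_n\in U(y_n)$ with each $z_n$ at distance at least $\varepsilon/2$ from $U(x)$; passing to a convergent subsequence $z_{n_j}\to z$ and invoking sequential upper hemi-continuity gives $\rho(z,U(x))=0$, which contradicts $\rho(z,U(x))\geqslant\varepsilon/2$. Hence the first alternative always holds, and $U$ is pointwise upper hemi-continuous.

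The main obstacle is the reduction in the converse: rewriting the genuinely pointwise clause ``for all $y$ with $\|x-y\|<\delta$, $U(y)\subseteq(U(x))_\varepsilon$'' as a decidable question about the countable dense subset of $G(U)$. This needs both a careful formulation of ``at distance at least $\varepsilon/2$ from $U(x)$'' so that \textbf{LPO} applies---here separability is used, together with the fact that under \textbf{LPO} infima of bounded sequences of reals exist---and a check that controlling the dense points of $G(U)$ lying over a neighbourhood of $x$ really controls $U$ on a neighbourhood of $x$. By contrast, the forward implication is routine once one observes that defining $U$ from a compact graph makes both ``closed fibres'' and ``separable graph'' automatic.
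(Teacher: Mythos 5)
Your argument is correct and follows essentially the same route as the paper: the statement yields \textbf{LPO} by testing pointwise upper hemi-continuity on the step-graph mapping $U$ of Figure~\ref{fig:U} near $x=1/2$, and the converse uses \textbf{LPO} to decide, over the countable dense subset of $G(U)$, whether a given modulus $1/n$ works, deriving a contradiction from sequential upper hemi-continuity otherwise. One small difference is that you explicitly pass to a convergent subsequence of the $z_n$ (using that \textbf{LPO} yields Bolzano--Weierstrass on the compact range) before invoking sequential upper hemi-continuity, a step the paper's proof leaves implicit.
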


\begin{proof}
 Let $U$ be a sequentially upper hemi-continuous mapping on $X$, let $\left(\left(x_n,y_n\right)\right)_{n\geqslant1}$ be a dense sequence in the graph of $U$, and fix $x\in X$ and $\varepsilon>0$. Using \textbf{LPO}, construct a binary double sequence $\left(\lambda_{k,n}\right)_{k,n\geqslant1}$ such that
 $$
  \lambda_{k,n}=1\ \ \Leftrightarrow\ \ \rho\left(x,x_n\right)<\frac{1}{k}\wedge\rho\left(y_n,U(x)\right)\geqslant\varepsilon.
 $$
\noindent
By \textbf{LPO}, either for all $k$ there exists an $n\in\mathbf{N}$ such that $\lambda_{n,k}=1$, or else there exists $k\in\mathbf{N}$ such that $\lambda_{k,n}=0$ for all $n\in\mathbf{N}$. If there exists $k\in\mathbf{N}$ such that $\lambda_{k,n}=0$ for each $n\in\mathbf{N}$, then $\delta=1/k$ satisfies the definition of pointwise upper hemi-continuity and we are done. Therefore it suffices to rule out the former case: if for each $k$ there exists $n$ such that $\lambda_{k,n}=1$, then there exist sequences $\left(x_n\right)_{n\geqslant1}, \left(y_n\right)_{n\geqslant1}$ in $X$ such that $\left(x_n\right)_{n\geqslant1}$ converges to $x$, and for each $n$, $y_n\in U\left(x_n\right)$ and $\rho\left(y_n,U(x)\right)\geqslant\varepsilon$---a contradiction.

\bigskip
\noindent
To show the converse consider the function $U:[0,1]\rightarrow\mathcal{P}^*([0,1])$ pictured in Figure \ref{fig:U}. It is straightforward to show that $G(U)$ is closed and hence that $U$ is sequentially upper hemi-continuous. Suppose that $U$ is pointwise upper hemi-continuous and let $a$ be a number close to $0$. Let $\delta>0$ be such that if $y\in B(\vert a\vert+1/2,\delta)$, then
 $$
  U(y)\subset \left(U(\vert a\vert+1/2)\right)_{1/2}.
 $$
\noindent
Either $a\neq0$ or $\vert a\vert<\delta$. In the latter case, if $a\neq0$, then 
 $$
  [0,1]=U(1/2)\subset U(\vert a\vert+1/2)_{1/2}=[1/2,1],
 $$
\noindent
which is absurd. Hence $a$ is in fact equal to $0$. Thus the sequential upper hemi-continuity of $f$ implies 
 $$
  \forall_{a\in\mathbf{R}}\left(a=0\vee a\neq0\right),
 $$
\noindent
which in turn implies \textbf{LPO}.
\end{proof}

\bigskip
\noindent
The natural notion of uniform pointwise upper hemi-continuity seems to be too strong to be of much interest. In particular, if $U$, in addition to satisfyin the uniform notion of pointwise upper hemi-continuity, is located, then $U$ is uniformly continuous as a function from $X$ to $\mathcal{P}^*(X)$ endowed with the standard Hausdorff metric, defined for located subsets $S,T$ of $X$ by
 $$
  \rho(S,T)=\max\left\{\sup\{\rho(s,T):s\in S\},\sup\{\rho(t,S):t\in T\}\right\}.
 $$
\noindent
The uniform version of pointwise upper hemi-continuity is not classically equivalent to the non-uniform version because, for a fixed $\varepsilon$, the $\delta(x)$ satisfying sequential upper hemi-continuity at $x$ need not vary continuously with $x$ and may fail to be bounded below by a positive valued continuous function. Another result of this is that few functions are constructively pointwise upper hemi-continuous; for instance the (benign) mapping given in the proof of Proposition \ref{LPO}.

\bigskip
\noindent
The uniform continuity of $U$ is a significantly stronger property than that of sequential upper hemi-continuity, and easily leads to an approximate fixed point theorem of relatively little interest.

\bigskip
\noindent
In order to find a more satisfactory constructive version of Kakutani's fixed point theorem (preferably classically equivalent to the classical one), we need to find a notion similar to pointwise sequential upper hemi-continuity, and with more computational content than sequential upper hemi-continuity, for which the uniform version is classically equivalent to the non-uniform version. To that end, we say that a mapping $U:X\rightarrow\mathcal{P}^*(X)$ is \emph{locally approximable} if for each $x\in X$ and each $\varepsilon>0$, there exists $\delta>0$ such that if $y,y^\prime\in B(x,\delta)$, $u\in U(y)$, $u^\prime\in U(y^\prime)$, and $t\in[0,1]$, then 
 $$
  \rho\left(\left(z_t,u_t\right),G(U)\right)<\varepsilon,
 $$
\noindent
where $z_t=ty+(1-t)y^\prime$ and $u_t=tu+(1-t)u^\prime$; note that we do not require $G(U)$ to be located here: we use `$\rho(x,S)<\varepsilon$' as a shorthand for `there exists $s\in S$ such that $\rho(x,s)<\varepsilon$'.

\begin{proposition}
 Every convex, pointwise upper hemi-continuous set valued mapping on a linear metric space is locally approximable.
\end{proposition}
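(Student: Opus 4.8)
The plan is to produce the required point of $G(U)$ over the centre $x$ rather than over the variable point $z_t$, exploiting convexity of the single set $U(x)$ to absorb a convex combination of approximants. Concretely: fix $x\in X$ and $\varepsilon>0$; invoke pointwise upper hemi-continuity at $x$ to obtain $\delta_0>0$ with $U(y)\subset(U(x))_{\varepsilon/2}$ whenever $\|y-x\|<\delta_0$; then set $\delta=\min\{\delta_0,\varepsilon/2\}$ and verify that this $\delta$ witnesses local approximability at $x$ for $\varepsilon$.

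To carry out the verification, I would take $y,y'\in B(x,\delta)$, $u\in U(y)$, $u'\in U(y')$, and $t\in[0,1]$. Since $\|y-x\|<\delta\leqslant\delta_0$ we get $u\in U(y)\subset(U(x))_{\varepsilon/2}$, so there is $v\in U(x)$ with $\|u-v\|<\varepsilon/2$; likewise there is $v'\in U(x)$ with $\|u'-v'\|<\varepsilon/2$. Put $v_t=tv+(1-t)v'$. By convexity of $U(x)$ we have $v_t\in U(x)$, hence $(x,v_t)\in G(U)$, and this is the point I would use as the witness. A routine estimate with the triangle inequality and homogeneity of the norm gives $\|u_t-v_t\|\leqslant t\|u-v\|+(1-t)\|u'-v'\|<\varepsilon/2$, while $z_t\in B(x,\delta)$ by convexity of the ball, so $\|z_t-x\|<\delta\leqslant\varepsilon/2$; hence $\rho\big((z_t,u_t),G(U)\big)\leqslant\rho\big((z_t,u_t),(x,v_t)\big)<\varepsilon$ for the standard product metric on $X\times X$.

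I expect no real obstacle; the only point deserving care is the choice of witness. It would be natural but wrong to try to keep $u_t$ close to $U(z_t)$—that is, to seek the witness over $z_t$—since that amounts to a lower-hemicontinuity/near-convexity property of the graph which pointwise upper hemi-continuity does not supply. Local approximability asks only that $(z_t,u_t)$ be close to the graph somewhere, and the point $(x,v_t)$ over the centre works precisely because $z_t$ is automatically close to $x$ and $v_t$ is a convex combination of points drawn from the single convex set $U(x)$. The structural inputs are exactly: convexity of balls in the linear metric space $X$, convexity of each $U(x)$, and the fact that one $\delta_0$ from pointwise upper hemi-continuity simultaneously controls $u\in U(y)$ and $u'\in U(y')$.
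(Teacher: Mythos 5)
Your proof is correct and takes essentially the same approach as the paper: fix $x$, use pointwise upper hemi-continuity to pull $u$ and $u'$ into $(U(x))_{\varepsilon/2}$, and place the witness over the centre $x$ using convexity of $U(x)$. The only cosmetic difference is that you form the explicit convex combination $v_t\in U(x)$, whereas the paper argues directly that $u_t\in(U(x))_{\varepsilon/2}$ via convexity of the expanded set; the estimates are otherwise identical.
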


\begin{proof}
Let $X$ be a linear metric space and let $U$ be a convex, pointwise upper hemi-continuous set valued mapping on $X$. Fix $x\in X$ and $\varepsilon>0$, and let $\delta\in(0,\varepsilon/2)$ be such that $U(y)\subset \left(U(x)\right)_{\varepsilon/2}$ for all $y\in B(x,\delta)$. Let $y,y^\prime\in B(x,\delta)$, $u\in U(x)$, $u^\prime\in U(y)$, and $t\in[0,1]$; since $\left(U(x)\right)_{\varepsilon/2}$ is convex, $u_t\in \left(U(x)\right)_{\varepsilon/2}$. Then
 $$
  \rho\left(z_t,x\right)\leqslant\max\{\rho(x,y),\rho(x,y^\prime)\}<\delta<\varepsilon/2,
 $$
\noindent
so
 \begin{eqnarray*}
  \rho\left(\left(z_t,u_t\right),G(U)\right) & \leqslant & \rho\left(\left(z_t,u_t\right),\{x\}\times U(x)\right)\\
       & < & \rho\left(\left(z_t,u_t\right),\left(x,u_t\right)\right)+\varepsilon/2\\
       & = & \rho\left(z_t,x\right)+\varepsilon/2\\
       & < & \varepsilon/2 +\varepsilon/2=\varepsilon.
 \end{eqnarray*}
\noindent
Hence $U$ is locally approximable.
\end{proof}

\bigskip
\noindent
Above we have negotiated from the sequential upper hemi-continuity of a set valued mapping to its being locally approximable via the property of pointwise upper hemi-continuity. By Proposition \ref{LPO} this requires \textbf{LPO}. It seems likely that we can prove this (classical) equivalence directly under the assumption of weaker nonconstructive principles; our next result is a first attempt at this.

\bigskip
\noindent
A subset $S$ of $\mathbf{N}$ is said to be \emph{psuedobounded} if for each sequence $\left(x_n\right)_{n\geqslant1}$ in $S$ there exists a natural number $N$ such that $x_n\leqslant n$ for all $n\geqslant N$. The following boundedness principle was introduced by Ishihara in \cite{ISHJSL2}.

\begin{quote}
 \textbf{BD}-$\mathbf{N}$ Every countable psuedobounded set is bounded.
\end{quote}

\noindent
In \cite{ISHJSL2}, Ishihara showed that \textbf{BD-}$\mathbf{N}$ holds\footnote{See \cite{BR} for an introduction to intuitionistic and recursive mathematics; \textbf{BD-}$\mathbf{N}$ is
provably false in another model of \textbf{BISH} (see \cite{Lietz}).} in the classical,
intuitionistic, and recursive models of \textbf{BISH}, and that the statement `every sequentially continuous mapping from a separable metric space is pointwise continuous' is equivalent, over \textbf{BISH}, to \textbf{BD-}$\mathbf{N}$. The proof of the following uses ideas from \cite{ISHJSL2}.

\begin{proposition}
\label{sUH&app}
 Suppose \textbf{LLPO} and \textbf{BD}-$\mathbf{N}$ hold. Let $U$ be a convex, sequentially upper hemi-continuous set valued mapping on a separable metric space. Then $U$ is locally approximable.
\end{proposition}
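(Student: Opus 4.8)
The plan is to adapt Ishihara's \textbf{BD}-$\mathbf{N}$ technique from \cite{ISHJSL2} --- the one underlying the equivalence of \textbf{BD}-$\mathbf{N}$ with ``every sequentially continuous map on a separable space is pointwise continuous'' --- to the present set-valued setting. Fix a point $x$ of the domain and an $\varepsilon>0$; we must produce $\delta>0$ witnessing local approximability at $x$ for $\varepsilon$. Call a tuple $\left(y,y',u,u',t\right)$ with $u\in U(y)$, $u'\in U(y')$, $t\in[0,1]$ an \emph{$\varepsilon$-obstruction} if $\rho\left(\left(z_t,u_t\right),G(U)\right)>\varepsilon/2$, where as before $z_t=ty+(1-t)y'$ and $u_t=tu+(1-t)u'$. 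Using a countable dense subset (of $X$, and --- for the mappings of interest --- of $G(U)$) to index the admissible $y,y'$ and the candidate nearby graph points, a finite $1/n$-net of $[0,1]$ for $t$, and \textbf{LLPO} to decide the finitely many real-number comparisons that arise (up to negligible error), define a subset $S\subseteq\mathbf{N}$ by putting $n\in S$ exactly when there is an $\varepsilon$-obstruction, witnessed by data of index ${}\leqslant n$, with $y,y'\in B(x,1/n)$ (dependent choice supplying the points $u\in U(y)$, $u'\in U(y')$; recall $U(y)$ need not be inhabited in general, though it is for the mappings that interest us). Then $S$ is a countable set, and a routine density argument together with the slack $\varepsilon/2<\varepsilon$ shows that if $S$ is bounded, say by $N$, then $\delta:=1/(N+1)$ witnesses local approximability at $x$ for $\varepsilon$.

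It thus suffices to prove that $S$ is pseudobounded, for then \textbf{BD}-$\mathbf{N}$ yields the boundedness of $S$. Given a sequence $\left(n_k\right)_{k\geqslant1}$ in $S$, use dependent choice to select, for each $k$, an $\varepsilon$-obstruction $\left(y_k,y_k',u_k,u_k',t_k\right)$ with $y_k,y_k'\in B\left(x,1/n_k\right)$. Since ``$n_k\leqslant k$'' is decidable, replace the $k$-th tuple by the trivial one based at $x$ whenever $n_k\leqslant k$; the modified sequences $\left(\hat y_k\right)$, $\left(\hat y_k'\right)$ then converge to $x$. Feeding these, together with the corresponding $u$'s (after extracting convergent behaviour on the image side), into sequential upper hemi-continuity, and arguing as in the proof of the preceding proposition --- each $\hat u_k$ is ultimately within $\varepsilon/3$ of $U(x)$, hence by convexity so is $\hat u_{t_k}$, and since $\hat z_{t_k}$ is within $1/k$ of $x$ the pair $\left(\hat z_{t_k},\hat u_{t_k}\right)$ is ultimately within $\varepsilon/2$ of $\{x\}\times U(x)\subseteq G(U)$ --- contradicts the $\varepsilon$-obstruction property at every $k$ with $n_k>k$. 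This forces $n_k\leqslant k$ for all large $k$, so $S$ is pseudobounded.

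I expect the genuine work to lie in two places. First, on the image side: sequential upper hemi-continuity only bites on sequences $\left(\hat u_k\right)$ that actually converge, but $U(y)$ is in general neither located nor contained in a compact set, so extracting usable convergent behaviour --- and organising the interleaving so that the contradiction above really pins down an index past which $n_k\leqslant k$, rather than merely refuting unboundedness --- is the Ishihara-style heart of the proof, and is where the sequential-compactness content of \textbf{LLPO} (essentially \textbf{WKL}) enters. Second, one must check carefully that $S$ is a bona fide countable set; this is the point at which separability (of the domain, and in effect of the graph) is consumed, and it is the reason both hypotheses \textbf{LLPO} and \textbf{BD}-$\mathbf{N}$ appear. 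The remaining steps --- the density reduction and the convex-combination estimates --- are routine, and parallel the proof of the preceding proposition and of Proposition \ref{LPO}.
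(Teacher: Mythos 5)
Your proposal follows the same route as the paper: define the set of obstruction radii as a subset of $\mathbf{N}$, prove it pseudobounded by a contradiction argument that feeds an escaping sequence through sequential upper hemi-continuity and convexity (using \textbf{LLPO}/\textbf{WKL} to extract convergent behaviour on the image side), and invoke \textbf{BD}-$\mathbf{N}$ to conclude boundedness and read off $\delta$. The only cosmetic divergence is that you exploit the decidability of $n_k\leqslant k$ directly where the paper constructs an auxiliary binary sequence via countable choice, and the two pressure points you flag --- countability of the obstruction set, and the compactness needed for the \textbf{WKL} extraction --- are exactly the places where the paper's own proof is terse.
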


\begin{proof}
Let $\left(x_n\right)_{n\geqslant1}$ be a dense sequence in $S$, and fix $x\in S$ and $\varepsilon>0$. Define
 \begin{eqnarray*}
  A & = & \{0\}\cup\left\{ k>0:\exists_{m,n}\left(x_m,x_n\in B(x,k^{-1})\wedge\right.\right. \\
   & &  \ \ \ \ \ \ \ \ \ \ \ \ \ \ \ \ \ \left.\left.\exists_{t\in[0,1]}\exists_{u\in U(x_m)}\exists_{u^\prime\in U(x_n)}(\rho((z_t,u_t),G(U))>\varepsilon/2)\right)\right\},
 \end{eqnarray*}
\noindent
where $z_t=tx_m+(1-t)x_n$ and $u_t=tu+(1-t)u^\prime$. We show that the set $A$ is pseudobounded. It then follows from \textbf{BD}-$\mathbf{N}$ that there exists $M>0$ such that $a<M$ for all $a\in A$. The definition of $A$ then ensures that $\delta=1/M$ satisfies the definition of local approximability.

\bigskip
\noindent
Let $\left(a_n\right)_{n\geqslant1}$ be a nondecreasing sequence in $A$. Using countable choice, we construct a binary sequence $\left(\lambda_n\right)_{i\geqslant1}$ such that
 \begin{eqnarray*}
  \lambda_n=0 & \Rightarrow & a_n/i<1;\\
  \lambda_n=1 & \Rightarrow & a_n/i>1/2.
 \end{eqnarray*}
\noindent
Let $u\in U(x)$, and construct sequences $\left(x_n\right)_{n\geqslant1},\left(x_n^\prime\right)_{n\geqslant1},\left(y_n\right)_{n\geqslant1},\left(y_n^\prime\right)_{n\geqslant1}$ in $S$ and a sequence $\left(t_n\right)_{n\geqslant1}$ in $[0,1]$ as follows. If $\lambda_n=0$, set $x_n=x_n^\prime=x$, $y_n=y_n^\prime=u$, and $t_i=0$. If $\lambda_n=1$, then pick $x_k,x_l\in B(x,i^{-1}), u\in U(x_k),u^\prime\in U(x_l)$, and $t\in[0,1]$ such that $\rho((z_t,u_t),G(U))>\varepsilon$, and set $x_n=x_k$, $x_n^\prime=x_l$, $y_n=u$, $y_n^\prime=u^\prime$, and $t_n=t$. Then $\left(x_n\right)_{n\geqslant1}$ and $\left(x_n^\prime\right)_{n\geqslant1}$ converge to $x$, and, since $\left(a_n\right)_{n\geqslant1}$ is nondecreasing and \textbf{LLPO} implies \textbf{WKL}, we may assume that there exist $y,y^\prime\in S$ and $t\in[0,1]$ such that $\left(y_n\right)_{n\geqslant1}$ converges to $y$, $\left(y_n^\prime\right)_{n\geqslant1}$ converges to $y^\prime$, and $\left(t_n\right)_{n\geqslant1}$ converges to $t$. Then, by the sequential upper hemi-continuity of $U$,  $\rho(y,U(x))=\rho(y^\prime,U(x))=0$; whence, since $U(x)$ is convex, $\rho(ty+(1-t)y^\prime,U(x))=0$. Let $N>0$ be such that
 $$
   \max\left\{\rho((x_n,y_n),(x,y)),\rho((x_n^\prime,y_n^\prime),(x,y))\right\}<\varepsilon
 $$
\noindent
for all $n\geqslant N$, and suppose that there exists $n\geqslant N$ with $\lambda_n=1$. Then 
 $$
  \rho((t_nx_n+(1-t_n)x_n^\prime,t_ny_n+(1-t_n)y_n^\prime),G(U))>\varepsilon,
 $$
\noindent
but 
 $$
  \rho((t_nx_n+(1-t_n)x_n^\prime,t_ny_n+(1-t_n)y_n^\prime),U(x))\ <\  \varepsilon + \rho((x,z),U(x))\ =\ \varepsilon.
 $$
\noindent
This contradiction ensures that $\lambda_n=0$ for all $n\geqslant N$; thus $A$ is pseudobounded.
\end{proof}

\bigskip
\noindent
We say that $U:X\rightarrow\mathcal{P}^*(X)$ is \emph{approximable} if it satisfies the uniform version of local approximability: for each $\varepsilon>0$, there exists $\delta>0$ such that if $x,x^\prime\in X$, $\Vert x-x^\prime\Vert<\delta$, $u\in U(x)$, $u^\prime\in U(x^\prime)$, and $t\in[0,1]$, then 
 $$
  \rho\left(\left(z_t,u_t\right),G(U)\right)<\varepsilon,
 $$
\noindent
where $z_t=tx+(1-t)x^\prime$ and $u_t=tu+(1-t)u^\prime$. Given a function $f:X\rightarrow X$, define $U_f:X\rightarrow\mathcal{P}^*(X)$ by $U_f(x)=\{f(x)\}$. If $f$ is continuous then $U_f$ is locally approximable, and if $f$ is uniformly continuous then $U_f$ is approximable.

\bigskip
\noindent
With the help of Brouwer's fan theorem for $\Pi^0_1$-bars, we can show that every locally approximable function on $[0,1]^n$ is approximable. We will prove a more general result.

\bigskip
\noindent
Let $2^\mathbf{N}$ denote the space of binary sequences, Cantor's space, and let $2^*$ be the set of finite binary sequences. A subset $S$ of $2^*$ is \emph{decidable} if for each $a\in 2^*$ either $a\in S$ or $a\notin S$. For two elements $u=(u_1,\ldots,u_m),v=(v_1,\ldots,v_n)\in 2^*$ we denote by $u\frown v$ the \emph{concatenation}
 $$
  (u_1,\ldots,u_m,v_1,\ldots,v_n)
 $$
\noindent
of $u$ and $v$. For each $\alpha\in 2^\mathbf{N}$ and each $N\in\mathbf{N}$ we denote by $\overline{\alpha}(N)$ the finite binary sequence consisting of the first $N$ terms of $\alpha$. A set $B$ of finite binary sequences is called a \emph{bar} if for each $\alpha\in 2^\mathbf{N}$ there exists $N\in\mathbf{N}$ such that $\overline{\alpha}(N)\in B$. A bar $B$ is said to be \emph{uniform} if there exists $N\in \mathbf{N}$ such that for each $\alpha\in2^\mathbf{N}$ there is $n\leqslant N$ with $\alpha(n)\in B$. The weakest form of Brouwer's fan theorem is:

\begin{quote}
 \textbf{FT$_\Delta$}: Every decidable bar is uniform.
\end{quote}

\noindent
Stronger versions of the fan theorem are obtained by allowing more complex bars. A set $S$ is said to be a $\Pi^0_1$\emph{-set} if there exists a detachable subset $D$ of $2^*\times\mathbf{N}$ such that 
 \begin{quote}
  for each $u\in2^*$ and each $n\in\mathbf{N}$, if $(u,n)\in D$, then $(u\frown0,n)\in D$ and $(u\frown1,n)\in D$,
 \end{quote}
\noindent
and $S=\left\{u\in2^*:\forall_{n\in\mathbf{N}}(u,n)\in D\right\}$. It is easy to see that the fan theorem for bars which are also $\Pi^0_1$-sets
 \begin{quote}
 \textbf{FT}$_{\Pi^0_1}$: Every $\Pi^0_1$-bar is uniform,
\end{quote}
\noindent
implies \textbf{FT}$_\Delta$. Recent work of Hannes Diener and Robert Lubarsky \cite{DL} shows that \textbf{FT}$_{\Pi^0_1}$ is strictly stronger than \textbf{FT}$_\Delta$ over \textbf{IZF}. Brouwer's fan theorem is not intuitionistically valid, but is accepted by some schools of constructive mathematics (see \cite{BR}). 

\bigskip
\noindent
Each fan theorem has an equivalent formulation where we allow an arbitrary finitely branching tree in the place of the binary fan $2^\mathbf{N}$. The fan theorem for a finitely branching tree is reduced to that on $2^\mathbf{N}$ by replacing tree width by tree depth: a tree consisting of a root with $n$ branches can be treated as a binary tree with depth $\lceil\log_2(n)\rceil$, possibly with some branches duplicated.

\bigskip
\noindent
A predicate $P$ on $S\times S\times \mathbf{R}^+$ is said to be a \emph{pointwise continuous predicate on} $S$ if
 \begin{itemize}
 \item[(i)] for each $\varepsilon>0$ and each $x\in S$, there exists $\delta>0$ such that if $y,y^\prime\in B(x,\delta)$, then $P(y,y^\prime,\varepsilon)$;
 \item[(ii)] if $\varepsilon>0$, $x^\prime\in S$, and $\left(x_n\right)_{n\geqslant1}$ is a sequence in $S$ converging to a point $x$ of $S$ and such that $P(x_n,x^\prime,\varepsilon)$ for each $n$, then $P(x,x^\prime,\varepsilon)$;
 \item[(iii)]  for all $x,x^\prime\in S$ and each $\varepsilon>0$ either $P(x,x^\prime,\varepsilon)$ or $\neg P(x,x^\prime,\varepsilon/2)$.
\end{itemize}
\noindent
If the $\delta$ in condition (i) can be chosen independent of $x$, then $P$ is said to be a \emph{uniformly continuous predicate on} $S$.

\bigskip
\noindent
For example to each pointwise (resp. uniformly) continuous function $f:S\rightarrow\mathbf{R}$ we can associate a pointwise (resp. uniformly) continuous predicate given by
 $$
  P(x,x^\prime,\varepsilon)\equiv |f(x)-f(x^\prime)|<\varepsilon,
 $$
\noindent
and to each locally approximable (resp. approximable) function we can associate a pointwise (resp. uniformly) continuous predicate $P$ such that $P(x,x^\prime,\delta)$ if and only if for all $u\in U(x)$, $u^\prime\in U(x^\prime)$, and $t\in[0,1]$, 
 $$
  \rho\left(\left(z_t,u_t\right),G(U)\right)<\varepsilon.
 $$
\noindent
The proof of the following is based on that of Theorem 2 of \cite{HI}. 

\begin{lemma}
\label{UCT1}
 Assume the fan theorem for $\Pi^0_1$-bars. Then every pointwise continuous predicate on $[0,1]$ is uniformly continuous.
\end{lemma}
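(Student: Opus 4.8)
The plan is to run the standard fan-theorem argument that converts a pointwise modulus of continuity into a uniform one, using the decidability condition (iii) to build a decidable (in fact $\Pi^0_1$) bar out of the local witnesses. First I would fix $\varepsilon>0$. For each dyadic rational $q\in[0,1]$, condition (i) gives some $\delta(q)>0$ with $P(y,y',\varepsilon)$ whenever $y,y'\in B(q,\delta(q))$; to make this usable on a fan I would instead work with binary expansions, coding $\alpha\in2^{\mathbf N}$ to the real $x_\alpha=\sum_n \alpha(n)2^{-n}$. Given a finite string $u\in2^*$ of length $N$, it determines a subinterval $I_u$ of length $2^{-N}$. The idea is to declare $u\in B$ when the interval $I_u$ (slightly fattened, say by a factor that still shrinks) is small enough that we have a definite local witness: concretely, $u\in B$ iff there exists a rational $\delta$ of the form $2^{-k}$ with $I_u\subseteq B(x_u,\delta)$ and the predicate $P(\cdot,\cdot,\varepsilon)$ provably holds on $B(x_u,\delta)^2$. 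The subtlety, and the reason we need $\Pi^0_1$-bars rather than merely decidable bars, is that "$P(y,y',\varepsilon)$ holds for all $y,y'$ in a ball" is not obviously a decidable predicate of $u$; but using condition (iii) we can replace it by the decidable approximation "not $\neg P(y,y',\varepsilon/2)$ for the relevant witnesses", and, crucially, the membership of $u$ in the bar we actually want can be written as a countable conjunction of decidable conditions, which is exactly the form of a $\Pi^0_1$-set.

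Next I would verify that $B$ is a bar: given any $\alpha\in2^{\mathbf N}$, the point $x_\alpha$ has, by condition (i), a $\delta>0$ that works, and for $N$ large enough $I_{\overline\alpha(N)}$ is contained in $B(x_\alpha,\delta/2)$, so $\overline\alpha(N)\in B$ — here one has to be a little careful because $x_{\overline\alpha(N)}$ is an endpoint of $I_{\overline\alpha(N)}$ rather than its centre, but inflating $\delta$ by a constant factor absorbs this, and one also has to handle the finitely many dyadic points that have two binary expansions, which is routine. Then $B$ is a $\Pi^0_1$-bar, so by \textbf{FT}$_{\Pi^0_1}$ it is uniform: there is $M$ such that every $\alpha$ has an initial segment of length $\leqslant M$ in $B$. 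Hence $[0,1]$ is covered by the finitely many intervals $I_u$ with $|u|\leqslant M$ and $u\in B$, each carried inside a ball on which $P(\cdot,\cdot,\varepsilon)$ holds.

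Finally I would extract the uniform $\delta$. Take $\delta$ smaller than $2^{-M}$ and also smaller than a Lebesgue number for this finite cover; then any $x,x'$ with $|x-x'|<\delta$ lie in a common $I_u$ (for some $u\in B$, $|u|\leqslant M$), hence in a common ball $B(x_u,\delta(u))$, hence $P(x,x',\varepsilon)$ holds. That gives uniform continuity of $P$ for the given $\varepsilon$, and since $\varepsilon$ was arbitrary we are done. I expect the main obstacle to be the bookkeeping that makes the bar genuinely $\Pi^0_1$ and decidable at each finite level: one cannot test "$P$ holds throughout a ball" directly, so the real work is in using condition (iii) to phrase membership as $\forall_{n}\,(\text{decidable})$, checking the closure condition $(u,n)\in D\Rightarrow(u\frown i,n)\in D$ demanded of a $\Pi^0_1$-set, and ensuring the finitary approximations at level $\varepsilon/2$ are strong enough that the resulting bar still captures every point. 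The convexity/graph structure of $U$ plays no role here; this is purely the predicate version, exactly as in Theorem 2 of \cite{HI}.
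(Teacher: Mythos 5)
Your overall plan is the same as the paper's: build a $\Pi^0_1$ bar out of the decidable dichotomy supplied by condition (iii), apply \textbf{FT}$_{\Pi^0_1}$ to uniformise it, and read off a uniform $\delta$. However, there are two genuine gaps.

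First, the choice of a binary fan $2^{\mathbf N}$ with the coding $\alpha\mapsto\sum_n\alpha(n)2^{-n}$ does not work constructively, and the problem you dismiss as ``routine'' is in fact fatal to this encoding. To run the final step you need, for any two reals $x,x'$ with $|x-x'|<2^{-(N+1)}$, codes $\alpha,\beta$ that agree on their first $N$ terms; with standard binary expansions you cannot do this, because finding a binary expansion of a real requires deciding comparisons with dyadic rationals, which is nonconstructive (roughly \textbf{LLPO}). Fattening the intervals $I_u$ does not rescue the argument either, since you then lose the clean correspondence between strings and points that the bar construction relies on. The paper avoids all of this by working over the ternary alphabet $X=\{-1,0,1\}$ with the signed-digit coding $f(\alpha)=\tfrac12+\sum_{n\geqslant1}2^{-(n+1)}\alpha(n)$; this map is constructively onto $[0,1]$, and nearby reals always admit representations sharing a long prefix. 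That is not bookkeeping; it is what makes the fan-theorem machinery apply at all.

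Second, you never use condition (ii), but the argument cannot close without it. The bar membership you construct (via $\lambda$ and the $\forall_n$ quantifier) only records that $P(\cdot,\cdot,\varepsilon)$ holds at the designated dyadic witness points of the form $f(u\frown\mathbf{0})$. When you then take arbitrary $x,x'$ in a common small interval, you are not entitled to conclude $P(x,x',\varepsilon)$: you only know $P$ at a dense family of approximants. Condition (ii) is precisely the sequential-closure property that lets you pass from $P$ at approximating points converging to $x$ and $x'$ to $P(x,x',\varepsilon)$ itself, and the paper invokes it explicitly at the end. Without it the final ``hence $P(x,x',\varepsilon)$ holds'' is a non sequitur. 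You should also nail down the decidable set $D\subseteq X^*\times\mathbf N$ and verify the closure property $(u,n)\in D\Rightarrow(u\frown a,n)\in D$ required for $B$ to be a $\Pi^0_1$-set; the paper does this by taking $(u,n)\in D$ iff $\lambda(u\frown v,u\frown w)=1$ for all $v,w$ of length at most $n-|u|$.
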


\begin{proof}
Let $P$ be a pointwise continuous predicate on $[0,1]$ and fix $\varepsilon>0$. Let $X=\{-1,0,1\}$ and let $X^*$ be the set of finite sequences of elements of $X$. Define $f:X^\mathbf{N}\rightarrow[0,1]$ by
 $$
   f(\alpha)=\frac{1}{2} + \sum_{n=1}^\infty 2^{-(n+1)}\alpha(n);
 $$
\noindent
then $f$ is a uniformly continuous function which maps $X^\mathbf{N}$ onto $[0,1]$. Using countable choice, we construct a binary valued function $\lambda$ on $X^*\times X^*$ such that
  \begin{eqnarray*}
    \lambda(u,v)=1 & \Rightarrow & P(f(u\frown\mathbf{0}),f(v\frown\mathbf{0}),\varepsilon);\\
    \lambda(u,v)=0 & \Rightarrow & \neg P(f(u\frown\mathbf{0}),f(v\frown\mathbf{0}),\varepsilon/2),
  \end{eqnarray*}
 \noindent
 where $\mathbf{0}=(0,\ldots)$. Let $D$ be the set of pairs $(u,n)$ in $ X^*\times\mathbf{N}$ such that
  \begin{quote}
   for all $v,w\in X^*$ with lengths at most $n-|u|$, $\lambda(u\frown v,u\frown w)=1$;
  \end{quote}
 \noindent
 clearly $D$ is a decidable set and if $(u,n)\in D$, then $(u\frown a,n)\in D$ for each $a\in\{-1,0,1\}$. Hence 
  $$
   B=\left\{u\in X^*:\forall_{n\in\mathbf{N}}(u,n)\in D\right\}
  $$
\noindent
is a $\Pi^0_1$-set. To see that $B$ is a bar, consider any $\alpha\in X^\mathbf{N}$. Since $P$ is a pointwise continuous predicate, there exists $\delta>0$ such that $P(y,y^\prime,\varepsilon/2)$ for all $y,y^\prime\in B(f(\alpha),\delta)$. If $N>0$ is such that $2^{-N}<\delta$, then for all $u,v\in X^*$
 $$
  f(\overline{\alpha}(N)\frown u\frown\mathbf{0}),f(\overline{\alpha}(N)\frown v\frown\mathbf{0})\in B(x,\delta),
 $$
\noindent
so 
 $$
  P(f(\overline{\alpha}(N)\frown u\frown\mathbf{0}),f(\overline{\alpha}(N)\frown v\frown\mathbf{0}),\varepsilon/2).
 $$
\noindent
Hence $\lambda(\overline{\alpha}(N)\frown u,\overline{\alpha}(N)\frown u)=1$ for all $u,v\in X^*$ and therefore $\overline{\alpha}(N)\in B$.

\bigskip
\noindent
Using \textbf{FT}$_{\Pi^0_1}$, compute $N\in\mathbf{N}$ such that $\overline{\alpha}(N)\in B$ for each $\alpha\in X^\mathbf{N}$. Let $x,y\in[0,1]$ be such that $\rho(x,y)<2^{-(N+1)}$. Then there exist $\alpha,\beta\in X^\mathbf{N}$ such that $f(\alpha)=x$, $f(\beta)=y$, and $\overline{\alpha}(N)=\overline{\beta}(N)$. Since $\overline{\alpha}(N)\in B$, 
 $$
  P(f(\overline{\alpha}(N)\frown u\frown\mathbf{0}),f(\overline{\alpha}(N)\frown v\frown\mathbf{0}),\varepsilon)
 $$
\noindent
for all $u,v\in X^*$. It now follows from condition (ii) of being a pointwise continuous predicate that $P(x,y,\varepsilon)$ holds. Hence $P$ is uniformly continuous.
\end{proof}

\begin{theorem}
 Assume the fan theorem for $\Pi^0_1$-bars. Then every pointwise continuous predicate on $[0,1]^n$ is uniformly continuous.
\end{theorem}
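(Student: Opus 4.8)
The plan is to bootstrap from the one-dimensional case (Lemma \ref{UCT1}) to the $n$-dimensional case by a product argument, mimicking the standard way one lifts the fan theorem from $2^{\mathbf N}$ to $(2^{\mathbf N})^n$, which the excerpt has already flagged as routine (``a tree consisting of a root with $n$ branches can be treated as a binary tree''). Concretely, I would fix a pointwise continuous predicate $P$ on $[0,1]^n$ and $\varepsilon>0$, and set up a surjection $f:X^{\mathbf N}\to[0,1]^n$ where now $X=\{-1,0,1\}^n$ (or, equivalently, interleave $n$ copies of the coding function from the proof of Lemma \ref{UCT1}, one for each coordinate). As before, $f$ is uniformly continuous and onto: the first $N$ coordinates of $\alpha\in X^{\mathbf N}$ determine $f(\alpha)$ to within $2^{-(N+1)}$ in the sup metric on $[0,1]^n$.

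Next I would use countable choice to build a binary-valued $\lambda$ on $X^*\times X^*$ with $\lambda(u,v)=1\Rightarrow P(f(u\frown\mathbf 0),f(v\frown\mathbf 0),\varepsilon)$ and $\lambda(u,v)=0\Rightarrow\neg P(f(u\frown\mathbf 0),f(v\frown\mathbf 0),\varepsilon/2)$, using condition (iii) of being a pointwise continuous predicate; this is legitimate exactly as in Lemma \ref{UCT1}. Then I would form the detachable set $D$ of pairs $(u,m)$ such that $\lambda(u\frown v,u\frown w)=1$ for all $v,w\in X^*$ of length at most $m-|u|$, and let $B=\{u\in X^*:\forall_m\,(u,m)\in D\}$; this $B$ is a $\Pi^0_1$-set by construction, and $D$ is closed under extension in $X$ because longer $v,w$ are tested by larger $m$. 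That $B$ is a bar uses condition (i): given $\alpha\in X^{\mathbf N}$, pointwise continuity of $P$ at $f(\alpha)$ yields $\delta>0$ with $P(y,y',\varepsilon/2)$ for all $y,y'\in B(f(\alpha),\delta)$, and choosing $N$ with $2^{-N}<\delta$ forces $\overline\alpha(N)\in B$. Applying \textbf{FT}$_{\Pi^0_1}$ (in its finitely-branching form, so working over $X^{\mathbf N}$ is fine by the remark in the excerpt) gives a uniform $N$; then for $x,y\in[0,1]^n$ with $\rho(x,y)<2^{-(N+1)}$ I pick $\alpha,\beta$ with $f(\alpha)=x$, $f(\beta)=y$, $\overline\alpha(N)=\overline\beta(N)$, and conclude $P(x,y,\varepsilon)$ from $\overline\alpha(N)\in B$ together with condition (ii).

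Honestly, this is close to a verbatim repetition of the proof of Lemma \ref{UCT1} with $\{-1,0,1\}$ replaced by $\{-1,0,1\}^n$, so the only real work is checking that the coding function $f$ is genuinely surjective onto $[0,1]^n$ and that the ``$\overline\alpha(N)=\overline\beta(N)\Rightarrow$ close inputs with a common prefix'' property survives in the product — both straightforward since everything decomposes coordinatewise and we use the sup metric (equivalent to the Euclidean one, which is all that matters for uniform continuity). The one mild subtlety I would be careful about is the passage from \textbf{FT}$_{\Pi^0_1}$ on the binary fan to \textbf{FT}$_{\Pi^0_1}$ on the $3^n$-branching fan $X^{\mathbf N}$: I would either invoke the equivalence noted in the excerpt directly, or note that padding each node of $X^*$ into a block of $\lceil n\log_2 3\rceil$ binary digits turns a $\Pi^0_1$-bar on $X^*$ into a $\Pi^0_1$-bar on $2^*$ and a uniform bound back. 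So the main obstacle is not conceptual but purely bookkeeping: making the product coding and the tree-width-to-tree-depth translation precise enough to be convincing without re-proving Lemma \ref{UCT1} in full. I would therefore present the proof tersely, pointing to the proof of Lemma \ref{UCT1} for the steps that are identical and spelling out only the coordinatewise coding and the finitely-branching fan theorem reduction.
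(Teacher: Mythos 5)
Your proof is correct, but it takes a genuinely different route from the paper's. The paper argues by induction on $n$, keeping the fan-theorem machinery confined to the one-dimensional Lemma \ref{UCT1}: given a pointwise continuous predicate $P$ on $[0,1]^n$, it slices along one coordinate to obtain predicates $P_x$ on $[0,1]^{n-1}$, applies the induction hypothesis to each $P_x$, then forms a predicate $P'$ on $[0,1]$ by universally quantifying over the remaining coordinates, checks that $P'$ is again a pointwise continuous predicate, and applies Lemma \ref{UCT1} once more. You instead re-run the proof of Lemma \ref{UCT1} directly over the finitely-branching fan $X^{\mathbf N}$ with $X=\{-1,0,1\}^n$, using the coordinatewise signed-binary coding and the finitely-branching form of $\textbf{FT}_{\Pi^0_1}$. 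Your version is more self-contained and sidesteps the need to verify that the derived predicates $P_x$ and $P'$ satisfy all three defining conditions of a pointwise continuous predicate --- condition (iii) for $P'$ is a decision between a universally quantified statement and its negation, which is exactly the sort of step that the paper dismisses as ``easily shown'' but which deserves a second look, and which your approach never has to face. The cost is that you must explicitly redo the surjective coding, the detachable set $D$, and the $\Pi^0_1$-bar construction in the $3^n$-branching setting, and carry out (or at least gesture at, as you do) the width-to-depth translation from $X^*$ to $2^*$; the paper's inductive route gets to treat Lemma \ref{UCT1} as a black box. Both routes are sound; yours trades a shorter reuse of the lemma for a cleaner verification burden.
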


\begin{proof}
 We proceed by induction on $n$. The case $n=1$ is just Lemma \ref{UCT1}. Suppose that the result holds for predicates on $[0,1]^{n-1}$, and let $P$ be a predicate on $[0,1]^n$. For each $x$ in $[0,1]$ let $P_x$ be the predicate on $[0,1]^{n-1}$ given by
 $$
  P_x(z,z^\prime,\varepsilon)\Leftrightarrow P((z,x),(z^\prime,x),\varepsilon).
 $$
\noindent
Since $P$ is a pointwise continuous predicate, $P_x$ is pointwise continuous for each $x\in[0,1]$. It follows from our induction hypothesis that each $P_x$ is uniformly continuous. Define a predicate $P^\prime$ on $[0,1]$ by 
 $$
  P^\prime(s,t,\varepsilon)\ \ \Leftrightarrow \forall_{y\in[0,1]^{n-1}}P_x((s,y),(t,y),\varepsilon).
 $$
\noindent
It is easily shown that $P^\prime$ is also a pointwise continuous predicate and that $P^\prime(s,t,\delta)$ holds for all $s,t\in[0,1]$ if and only if $P(x,x^\prime,\delta)$ holds for all $x,x^\prime\in[0,1]^n$. By Lemma \ref{UCT1}, $P^\prime$ is uniformly continuous; whence $P$ is uniformly continuous.
\end{proof}

\begin{corollary}
\label{FT}
 The fan theorem for $\Pi^0_1$-bars implies that every locally approximable, located mapping on $[0,1]^n$ is approximable.
\end{corollary}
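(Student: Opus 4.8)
The plan is to run the corollary through the preceding Theorem, using the dictionary between set valued mappings and predicates established above. Let $U$ be a locally approximable, located set valued mapping on $[0,1]^n$. By the remarks following the definition of a pointwise continuous predicate, $U$ carries an associated predicate $P$ on $[0,1]^n\times[0,1]^n\times\mathbf{R}^+$ with
 $$
  P(x,x^\prime,\varepsilon)\ \Leftrightarrow\ \forall_{u\in U(x)}\,\forall_{u^\prime\in U(x^\prime)}\,\forall_{t\in[0,1]}\ \rho((z_t,u_t),G(U))<\varepsilon,
 $$
where $z_t=tx+(1-t)x^\prime$ and $u_t=tu+(1-t)u^\prime$, and one checks that $P$ is a pointwise continuous predicate. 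Clause (i) of that definition is precisely local approximability of $U$, and clause (ii) is routine; clause (iii) is where the located hypothesis is used: since each $U(x)$ is located and lies in the compact space $[0,1]^n$, we can approximate the quantity $\sup\{\rho((z_t,u_t),G(U)):u\in U(x),\,u^\prime\in U(x^\prime),\,t\in[0,1]\}$ to within any prescribed tolerance, and so decide whether it is $<\varepsilon$ or $\geqslant\varepsilon/2$.

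With $P$ recognised as a pointwise continuous predicate, the corollary follows by a direct appeal to the preceding Theorem: under the fan theorem for $\Pi^0_1$-bars, $P$ is a uniformly continuous predicate, so for each $\varepsilon>0$ there is a single $\delta>0$ such that $P(y,y^\prime,\varepsilon)$ whenever $y,y^\prime\in B(x,\delta)$ for some $x\in[0,1]^n$. In particular $\Vert x-x^\prime\Vert<\delta$ forces $P(x,x^\prime,\varepsilon)$ (take $x$ itself as the base point), which says precisely that for all $u\in U(x)$, $u^\prime\in U(x^\prime)$, and $t\in[0,1]$ we have $\rho((z_t,u_t),G(U))<\varepsilon$. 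Since $\varepsilon>0$ was arbitrary, $U$ is approximable.

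The substance of the argument lies entirely in the preceding Theorem (hence in the fan theorem); the only step I expect to need real care is the passage to the predicate $P$, and within it the verification of clause (iii), which is the point that genuinely exploits locatedness of $U$ and where one must make sure that the relevant distances to $G(U)$, and their suprema over $u\in U(x)$, $u^\prime\in U(x^\prime)$ and $t\in[0,1]$, exist constructively.
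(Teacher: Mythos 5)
Your proposal follows exactly the route the paper takes: the paper's proof of this corollary is the single sentence that it ``follows directly from the previous theorem and the statement preceding Lemma \ref{UCT1}'', i.e.\ it combines the theorem with the asserted dictionary between (locally) approximable mappings and pointwise (uniformly) continuous predicates, which is precisely what you do. The one place where your fleshing-out needs more care is the verification of clause (iii). You propose to approximate $\sup\{\rho((z_t,u_t),G(U)):u\in U(x),\,u'\in U(x'),\,t\in[0,1]\}$, but for this supremum to be a well-defined real number one needs each $\rho((z_t,u_t),G(U))$ to exist as a real, i.e.\ one needs the \emph{graph} $G(U)$ to be located---and that does not follow merely from each $U(x)$ being located, which is all the hypothesis gives you. (Recall the paper deliberately reads ``$\rho(p,G(U))<\varepsilon$'' only as shorthand for an existential statement precisely to avoid assuming $G(U)$ located.) So as written the locatedness of $U$ alone does not license the computation you appeal to; one would have to argue separately that $G(U)$ is totally bounded, or replace the supremum by a finite-approximation argument over $\varepsilon$-nets of $U(x)$ and $U(x')$. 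Similarly, clause (ii) is not quite ``routine'': $P(x_n,x',\varepsilon)\Rightarrow P(x,x',\varepsilon)$ involves a strict inequality surviving a limit and a change from $u\in U(x_n)$ to $u\in U(x)$, which needs an $\varepsilon$-fudging argument using local approximability at $x$. None of this is a divergence from the paper---the paper simply asserts the dictionary without proof---but since you single out (iii) as the step ``where one must make sure'' things work, it is worth noting that your sketch there has a genuine hole.
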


\begin{proof}
 Follows directly from the previous theorem and the statement preceding Lemma \ref{UCT1}.
\end{proof}

\bigskip
\noindent
A mapping $U$ is approximable if for each positive $\varepsilon$ there exists a positive $\delta$ such that the convex hull of any two points in the graph of $U$ which are separated by less than $\delta$ never strays more than $\varepsilon$ from the graph of $U$; our next lemma shows that if $U$ is approximable, then we can generalise this from any two points of $G(U)$ to any finite subset of $G(U)$. This will allow us to give a constructive version of Kakutani's fixed point theorem which is classically equivalent to the classical version.

\begin{lemma}
\label{L_app}
 Let $U:X\rightarrow\mathcal{P}^*(X)$ be an approximable function. Then for each $n>0$ and each $\varepsilon>0$ there exists $\delta>0$ such that for all $x_1,\ldots,x_n,u_1,\ldots,u_n \in X$ and all $\mathbf{t}\in[0,1]^n$, if $u_i\in U(x_i)$ for each $i$, $\sum_{i=1}^nt_i=1$, and 
 $$
  \max\{\Vert x_i-x_j\Vert:1\leqslant i,j\leqslant n\}<\delta,
 $$
\noindent
then 
 $$
  \rho((z_\mathbf{t},u_\mathbf{t}),G(U))<\varepsilon,
 $$
\noindent
where $z_\mathbf{t}=\sum_{i=1}^n t_ix_i$ and $u_\mathbf{t}=\sum_{i=1}^n t_iu_i$.
\end{lemma}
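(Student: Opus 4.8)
The plan is to induct on $n$. The base case $n=2$ is immediate: this is essentially the definition of approximability (with the extra observation that $t_1+t_2=1$ means $\mathbf{t}=(t,1-t)$ for some $t\in[0,1]$, so $z_{\mathbf t}=z_t$ and $u_{\mathbf t}=u_t$ in the notation of the definition). Actually we should take $n=1$ as the trivial base case, where $z_{\mathbf t}=x_1$, $u_{\mathbf t}=u_1\in U(x_1)$, so $(z_{\mathbf t},u_{\mathbf t})\in G(U)$ and any $\delta$ works.

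For the inductive step, suppose the result holds for $n-1$. Fix $n>0$ and $\varepsilon>0$. First I would use approximability to get $\delta_1>0$ witnessing the two-point condition for $\varepsilon/2$, and the inductive hypothesis (applied with $n-1$ and $\varepsilon/2$) to get $\delta_2>0$ for the $(n-1)$-point condition; I would set $\delta=\min\{\delta_1,\delta_2\}$ (possibly shrinking further — see below). Now given $x_1,\dots,x_n$, $u_1,\dots,u_n$ with $u_i\in U(x_i)$, and $\mathbf t\in[0,1]^n$ with $\sum t_i=1$ and all pairwise distances $<\delta$, the idea is to peel off the last coordinate: write $s=\sum_{i=1}^{n-1}t_i=1-t_n$. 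Handle the degenerate case $t_n$ close to $1$ (equivalently $s$ small) separately, where $(z_{\mathbf t},u_{\mathbf t})$ is close to $(x_n,u_n)\in G(U)$; otherwise $s$ is bounded away from $0$, so the normalized weights $t_i/s$ ($1\le i\le n-1$) are a legitimate convex combination, and the inductive hypothesis gives a point $(p,q)\in G(U)$ — say $q\in U(p)$ — within $\varepsilon/2$ of $\bigl(\sum_{i<n}(t_i/s)x_i,\ \sum_{i<n}(t_i/s)u_i\bigr)$. Then $z_{\mathbf t}=s\cdot(\sum_{i<n}(t_i/s)x_i)+t_nx_n$ and similarly for $u_{\mathbf t}$, so $(z_{\mathbf t},u_{\mathbf t})$ is close (within $s\cdot\varepsilon/2<\varepsilon/2$, since $\|p-\sum_{i<n}(t_i/s)x_i\|<\varepsilon/2$ and likewise for the $u$'s) to $\bigl(sp+t_nx_n,\ sq+t_nu_n\bigr)$, which is the convex combination of the two graph points $(p,q)$ and $(x_n,u_n)$. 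A two-point application of approximability then places this within $\varepsilon/2$ of $G(U)$, provided $\|p-x_n\|<\delta_1$; combining the two estimates gives $\rho((z_{\mathbf t},u_{\mathbf t}),G(U))<\varepsilon$.

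The main obstacle is the bookkeeping around the two degenerate regimes and making the constructive case split clean. Since we cannot in general decide whether $t_n=0$ or $t_n>0$, I would instead do the split on whether $t_n<1/2$ or $t_n>1/4$ (which is decidable for reals) — in the first branch $s\ge 1/2$ so the normalization is safe, and in the second branch I still need a uniform estimate; a cleaner route is to note that for \emph{every} $\mathbf t$ we can write $z_{\mathbf t},u_{\mathbf t}$ as a convex combination $s\cdot(\text{normalized }(n-1)\text{-combination})+(1-s)(x_n,u_n)$ whenever $s>0$, and handle $s$ near $0$ by continuity, choosing the threshold so the two estimates overlap. The other point needing care: the two-point application of approximability requires the two graph points to be within $\delta_1$, i.e. $\|p-x_n\|<\delta_1$; but $\|p-z_{i}\|<\varepsilon/2$ for the barycentre $z=\sum_{i<n}(t_i/s)x_i$, and $\|z-x_n\|<\delta$ (convex combination of points each within $\delta$ of $x_n$), so $\|p-x_n\|<\varepsilon/2+\delta$ — hence I should additionally require at the outset that $\delta$ and $\varepsilon$ be small enough (or run the induction with a stronger statement bounding the witness location), or simply invoke approximability with parameter $\varepsilon/2$ on points within $\delta_1':=\delta_1$ where we have shrunk $\delta\le\delta_1/2$ and assumed without loss of generality $\varepsilon<\delta_1$. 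These are routine adjustments; the conceptual content is entirely the peeling-off induction.
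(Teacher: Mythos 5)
Your approach---peel off the last coordinate, normalize the remaining weights, apply the inductive hypothesis to get a nearby graph point $(p,q)$, then invoke two-point approximability on $(p,q)$ and $(x_n,u_n)$---is exactly the paper's proof. The two subtleties you flag at the end are both real, and in fact the paper's own proof passes over both in silence. On the division issue: your split $t_n<1/2$ versus $t_n>1/4$ gives a usable bound only in the first branch; what is wanted is to decide, by cotransitivity, whether $s>\eta/2$ or $s<\eta$ with $\eta$ chosen small enough that in the degenerate branch $(z_{\mathbf t},u_{\mathbf t})$ lands within $\varepsilon$ of $(x_n,u_n)\in G(U)$---and note that this last estimate also needs $X$ bounded, which is fine for the paper's applications but is not stated in the lemma.

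On the norm control $\|p-x_n\|<\delta_1$: you correctly observe that the best one gets for free is $\|p-x_n\|<\varepsilon/2+\delta$, but the fix ``assume WLOG $\varepsilon<\delta_1$'' is circular, because $\delta_1$ is chosen as a function of $\varepsilon$. The clean repair is to run the $(n-1)$-case at the finer target $\varepsilon'=\min\{\varepsilon/2,\,\delta_1/2\}$, obtaining some $\delta_2>0$, and then take $\delta\leqslant\min\{\delta_2,\,\delta_1/2\}$; one then has $\|p-x_n\|<\varepsilon'+\delta\leqslant\delta_1$, and both contributions to the final error are at most $\varepsilon/2$. Your alternative suggestion of strengthening the inductive statement to bound the location of the witness would also work. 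So: same route as the paper, with commendable awareness of the weak spots, but the proposed repairs need sharpening before the argument closes.
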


\begin{proof}
We proceed by induction; the case $n=1$ is trivial. Suppose that we have shown the result for $n=k-1$. Let $\mathbf{t}\in[0,1]^k$ and $u_1,\ldots,u_k$ be as in the statement of the lemma, and let $\delta>0$ be such that for all $x_1,\ldots,x_{k-1} \in X^n$ and all $\mathbf{t}\in[0,1]^{k-1}$, if $u_i\in U(x_i)$ for each $i$, $\sum_{i=1}^{k-1}t_i=1$, and 
  $$
   \max\{\Vert x_i-x_j\Vert:1\leqslant i,j\leqslant k-1\}<\delta,
  $$
\noindent
then $\rho((z_\mathbf{t},u_\mathbf{t}),G(U))<\varepsilon/2$. Let $\mathbf{t}^\prime$ be the $k-1$ dimensional vector with $i$-th component $t_i/\sum_{j=1}^{k-1}t_j$. Then
 $$
  \rho\left(\left(z_{\mathbf{t}^\prime},u_{\mathbf{t}^\prime}\right),G(U)\right)<\varepsilon/2.
 $$
\noindent
Picking $(x,u)\in G(U)$ with $\rho\left(\left(z_{\mathbf{t}^\prime},u_{\mathbf{t}^\prime}\right),(x,u)\right)<\varepsilon/2$ and $t\in[0,1]$ such that $\rho((z_\mathbf{t},u_\mathbf{t}),(tx+(1-t)x_n,tu+(1-t)u_n))<\varepsilon/2$, we have that
 \begin{eqnarray*}
  \rho\left(\left(z_{\mathbf{t}},u_{\mathbf{t}}\right),G(U)\right) & < & \rho((z_\mathbf{t},u_\mathbf{t}),(tx+(1-t)x_n,tu+(1-t)u_n)) + \varepsilon/2\\
& < & \varepsilon/2 +\varepsilon/2=\varepsilon.
 \end{eqnarray*}
\noindent
This completes the induction.
\end{proof}

\begin{theorem}
 \label{Kak}
 Let $S$ be a totally bounded subset of $\mathbf{R}^n$ with convex closure and let  $U$ be an approximable set valued mapping on $S$. Then for each $\varepsilon>0$ there exists $x\in S$ such that $\rho(x,U(x))<\varepsilon$.
\end{theorem}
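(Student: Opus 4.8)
The plan is to reduce the stated result to the constructive approximate Brouwer fixed point theorem, by interpolating a single-valued, uniformly continuous, piecewise-affine map $f$ through selected points of the graph of $U$, applying approximate Brouwer to $f$, and then transferring the resulting approximate fixed point of $f$ back to an approximate fixed point of $U$ by means of the finite-point form of approximability supplied by Lemma \ref{L_app}.

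First I would normalise: rescaling by an affine bijection, assume $\overline S\subseteq[0,1]^n$. Being totally bounded and complete, $\overline S$ is compact, and by hypothesis it is convex, hence located, so the nearest-point projection $Q:[0,1]^n\rightarrow\overline S$ exists (Theorem 6 of \cite{BRS}) and is uniformly continuous (Theorem 3 of \cite{FPT}). Fix $\varepsilon>0$. Apply Lemma \ref{L_app} with $n+1$ points and precision $\varepsilon/4$ to obtain $\delta_0\in(0,\varepsilon/4)$ such that any convex combination of at most $n+1$ pairs of $G(U)$ whose first coordinates are pairwise within $\delta_0$ lies within $\varepsilon/4$ of $G(U)$. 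Choose a triangulation of $[0,1]^n$ of mesh less than a small fixed fraction of $\delta_0$ --- one may take Kuhn's subdivision of a fine cubical grid, for which the simplex containing a given point, and its barycentric coordinates, are computable --- with finite vertex set $V$. For each $v\in V$, using the density of $S$ in $\overline S$, pick $p_v\in S$ with $\Vert p_v-Q(v)\Vert$ small, and pick $u_v\in U(p_v)$; here one uses that the values of $U$ are inhabited (densely many inhabited values would suffice, and this is implicit in the conclusion). Let $f:[0,1]^n\rightarrow\mathbf{R}^n$ be the continuous map that is affine on each simplex of the triangulation with $f(v)=u_v$ for every $v\in V$. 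Every value of $f$ is a convex combination of finitely many of the points $u_v\in S\subseteq\overline S$, so $f$ maps into $\overline S\subseteq[0,1]^n$; and, being piecewise affine over a finite triangulation, $f$ is Lipschitz and hence uniformly continuous.

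Consequently $g:=f\circ Q$ is a uniformly continuous map of $[0,1]^n$ into itself, so the constructive approximate Brouwer fixed point theorem yields $y\in[0,1]^n$ with $\Vert y-g(y)\Vert<\eta$, where $\eta$ is chosen small relative to $\varepsilon$. Put $\bar y=Q(y)$; since $g(y)=f(\bar y)\in\overline S$, we have $\Vert y-\bar y\Vert\leqslant\Vert y-g(y)\Vert$, hence $\Vert\bar y-f(\bar y)\Vert<2\eta$. After a harmless perturbation we may take $\bar y$ in the interior of a unique simplex, with vertices $v_0,\ldots,v_n$ and barycentric coordinates $\mu_0,\ldots,\mu_n$ of $\bar y$; set $x=\sum_j\mu_jp_{v_j}\in\overline S$, so that $f(\bar y)=\sum_j\mu_ju_{v_j}$. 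Since $\bar y\in\overline S$ and the mesh is small, each $v_j$ satisfies $\Vert v_j-Q(v_j)\Vert=\rho(v_j,\overline S)\leqslant\Vert v_j-\bar y\Vert$ and is therefore close both to $\bar y$ and, via its chosen $p_{v_j}$, to $\overline S$; in particular the $p_{v_j}$ are pairwise within $\delta_0$ and $\Vert x-\bar y\Vert$ is small. Lemma \ref{L_app} now provides $(x^*,u^*)\in G(U)$ with $\Vert x-x^*\Vert$ and $\Vert f(\bar y)-u^*\Vert$ both less than $\varepsilon/4$. Then $x^*\in S$, $u^*\in U(x^*)$, and
 $$
  \Vert x^*-u^*\Vert\leqslant\Vert x^*-x\Vert+\Vert x-\bar y\Vert+\Vert\bar y-f(\bar y)\Vert+\Vert f(\bar y)-u^*\Vert;
 $$
with $\eta$ and the mesh small enough, a routine estimate makes the right-hand side less than $\varepsilon$, so $\rho(x^*,U(x^*))<\varepsilon$ in the shorthand sense, as required.

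The step I expect to be the main obstacle is controlling the boundary behaviour in this final triangle inequality. The point $y$ returned by approximate Brouwer need not lie in $\overline S$, and if one argues with $y$ directly the amount by which it overshoots $\overline S$ is passed on to the vertices of the surrounding simplex and can overwhelm the estimate; precomposing $f$ with the projection $Q$ and then working with $\bar y=Q(y)\in\overline S$ is precisely the device that keeps every term small. A related point worth stressing is that the uniform form of approximability --- in the $(n+1)$-point strengthening of Lemma \ref{L_app} --- is genuinely used: a single $\delta_0$ must control all simplices of the triangulation at once, which local approximability alone would not give. The remaining ingredients --- the existence and uniform continuity of $Q$, the uniform continuity of the piecewise-affine $f$, the computability of the triangulation data, and the arithmetic of the four error terms --- are routine.
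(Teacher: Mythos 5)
Your proposal is correct and follows essentially the same route as the paper: apply Lemma \ref{L_app} to fix $\delta$, interpolate an affine map through finitely many points of $G(U)$ on a fine net, apply approximate Brouwer, and transfer back via Lemma \ref{L_app}. You are considerably more explicit about the mechanics (the Kuhn triangulation of $[0,1]^n$, precomposing with the nearest-point projection $Q$, and the four-term error budget) than the paper, which works directly with ``the affine function on $S$'' over a discrete $\delta$-approximation, but the underlying argument is the same.
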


\begin{proof}
Fix $\varepsilon>0$ and let $\delta>0$ be such that for all $x_1,\ldots,x_k \in X^n$ and all $\mathbf{t}\in[0,1]^n$, if $u_i\in U(x_i)$ for each $i$, $\sum_{i=1}^nt_i=1$, and $\max\{\Vert x_i-x_j\Vert:1\leqslant i,j\leqslant k\}<\delta$, then 
 $$
  \rho((z_\mathbf{t},u_\mathbf{t}),G(U))<\varepsilon/3.
 $$

\bigskip
\noindent
Let $S^\prime=\left\{x_1,\ldots,x_l\right\}$ be a discrete $\delta$-approximation to $S$. For each $x_i\in S^\prime$, pick $u_i\in U(x_i)$; let $g$ be the uniformly continuous affine function on $S$ that takes the value $u_i$ at $x_i$ for each $1\leqslant i\leqslant l$. By the approximate Brouwer fixed point theorem, there exists $y\in S$ such that $\rho(y,g(y))<\varepsilon/3$. By our choice of $\delta$, there exists $(x,u)\in G(U)$ such that $\rho((y,g(y)),(x,u))<\varepsilon/3$. Therefore
 \begin{eqnarray*}
  \rho(x,u) & \leqslant & \rho(x,y) +\rho(y, g(y))+\rho(g(y),u)\\
    & < & \varepsilon/3+\varepsilon/3+\varepsilon/3=\varepsilon,
 \end{eqnarray*}
\noindent
so $\rho(x,U(x))<\varepsilon$.
\end{proof}

\bigskip
\noindent
To see that Theorem \ref{Kak} is classically equivalent to the classical theorem let $U:S\rightarrow\mathcal{P}^*(S)$ be as in the classical Kakutani fixed point theorem; then, as previously shown, $U$ is approximable under classical logic. Using the above theorem (and countable choice), construct a sequence $\left(x_n\right)_{n\geqslant1}$ in $X$ such that $\rho(x_n,U(x_n))<1/n$ for each $n$. Since $X$ is compact, we may assume, by \textbf{WKL}, that $\left(x_n\right)_{n\geqslant1}$ converges to some $x\in X$. For each $n$, pick $y_n\in U(x_n)$ such that $\rho(x_n,y_n)<1/n$. Then $y_n\rightarrow x$ and so, since $U$ is closed and sequential upper hemi-continuous, $x\in U(x)$. The following diagram summarises this classical proof.

\vspace{-.7cm}

\begin{center}
 \pspicture(0,1)(14,7)
 \psset{xunit=0.9,yunit=1.}

 \rput[l](0,5){\psframebox[linewidth=1pt]{{\scriptsize Sequential UH-Cts}}}
 \rput[l](4,5){\psframebox[linewidth=1pt]{{\scriptsize Pointwise UH-Cts}}}
 \rput[l](8,5){\psframebox[linewidth=1pt]{{\scriptsize Locally approximable}}}
 \rput[c](13.9,5){\psframebox[linewidth=1pt]{{\scriptsize Approximable}}}
 \psline[doubleline=true,doublesep=.07,arrowsize=8pt]{->}(3.15,5)(3.9,5)
 \psline[doubleline=true,doublesep=.07,arrowsize=8pt]{->}(7.15,5)(7.9,5)
 \psline[doubleline=true,doublesep=.07,arrowsize=8pt]{->}(11.7,5)(12.64,5)

 \rput[c](3.5,5.85){{\scriptsize \textbf{LPO}}}
 \psline[doubleline=true,doublesep=.04,arrowsize=4pt]{<->}(3.5,5.2)(3.5,5.7)

 \rput[b](12.17,5.2){{\scriptsize \textbf{FT}$_{\Pi^0_1}$}}

 \psline[doubleline=true,doublesep=.07,arrowsize=8pt,linearc=110pt]{->}(2.9,4.5)(5.5,3.6)(8.2,4.5)

 \rput[t](5.5,3.55){{\scriptsize \textbf{\textbf{LLPO} + \textbf{BD}-$\mathbf{N}$}}}

 \rput(13.9,3.6){\psframebox[linewidth=1pt]{\parbox[c]{1.6cm}
  {\scriptsize Approximate {\white -}fixed points}}}
 \psline[doubleline=true,doublesep=.07,arrowsize=8pt]{->}(13.9,4.675)(13.9,4.15)
 \psline[doubleline=true,doublesep=.07,arrowsize=8pt]{->}(13.9,3.1)(13.9,2.57)

 \rput(13.9,2.2){\psframebox[linewidth=1pt]{{\scriptsize Fixed point}}}

 \psline[doubleline=true,doublesep=.04,arrowsize=4pt]{<->}(13,2.95)(13.6,2.95)
 \rput[r](12.75,2.95){\scriptsize \textbf{LLPO}}

 \endpspicture
\end{center}

\vspace{-1.1cm}

\bigskip
\noindent
In particular, we feel that this gives a conceptually more straightforward classical proof of the Kakutani fixed point theorem than the standard classical proofs: that sequential upper hemi-continuity classically implies approximability is quite intuitive, and that approximable mappings have approximate fixed points is very natural in light of Brouwer's fixed point theorem; it then just remains to apply the result (equivalent to \textbf{WKL}) that any `continuous problem' on a compact space which has approximate solutions has an exact solution.

\subsection{An extension}

\noindent
Theorem \ref{Kak} gives a very simple and intuitive constructive version of Kakutani's fixed point theorem. An examination of the proof shows that we in fact only require our set valued mapping $U$ to satisfy the following condition, weaker than approximability and often much easier to verify. A set valued mapping $U$ on a metric space $X$ is said to be \emph{weakly approximable} if for each $\varepsilon>0$, there exist
 \begin{itemize}
  \item[$\rhd$] a positive real number $\delta<\epsilon$,
  \item[$\rhd$] a $\delta/2$-approximation $S$ of $X$, and
  \item[$\rhd$] a function $V$ from $S$ into $\mathcal{P}^*X$ with $G(V)\subset G(U)$,
 \end{itemize}
\noindent
such that if $x,x^\prime\in S$, $\Vert x-x^\prime\Vert<\delta$, $u\in V(x)$, $u^\prime\in V(x^\prime)$, and $t\in[0,1]$, then 
 $$
  \rho\left(\left(z_t,u_t\right),G(U)\right)<\varepsilon.
 $$
\noindent
If $V$ can be chosen independent of $\varepsilon$, in which case $S$ is a dense subset of $X$, then $U$ is said to be \emph{weakly approximable with respect to} $V$.

\bigskip
\noindent
The proofs of Lemma \ref{L_app} and Theorem \ref{Kak} readily extend to give the following result.

\begin{theorem}
 \label{Kak2}
 Let $S$ be a totally bounded subset of $\mathbf{R}^n$ with convex closure and let  $U$ be a weakly approximable set valued mapping on $S$. Then for each $\varepsilon>0$ there exists $x\in S$ such that $\rho(x,U(x))<\varepsilon$.
\end{theorem}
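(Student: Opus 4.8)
The plan is to mimic the proofs of Lemma \ref{L_app} and Theorem \ref{Kak}, substituting the witnessing data supplied by weak approximability for the uniform $\delta$ used in the approximable case. First I would fix $\varepsilon>0$ and, applying the definition of weak approximability at the parameter $\varepsilon/3$ (so that three error terms of size $\varepsilon/3$ sum to $\varepsilon$), obtain a positive $\delta<\varepsilon/3$, a $\delta/2$-approximation $S'\subset S$, and a function $V$ on $S'$ with $G(V)\subset G(U)$ such that any two points of $G(V)$ whose first coordinates are within $\delta$ generate a chord staying within $\varepsilon/3$ of $G(U)$. Since $S'$ is a $\delta/2$-approximation of $S$ it is, after thinning, a discrete finite $\delta/2$-approximation $\{x_1,\dots,x_l\}$; for each $i$ I pick $u_i\in V(x_i)\subset U(x_i)$, taking care that $(x_i,u_i)\in G(V)$.

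Next I would rerun the induction of Lemma \ref{L_app} but with $G(V)$ in the hypothesis: the statement to be proved by induction on $n$ is that for points $x_1,\dots,x_n$ in $S'$ with $\max_{i,j}\Vert x_i-x_j\Vert<\delta$ and $u_i\in V(x_i)$, and any convex combination coefficients $\mathbf t$, one has $\rho((z_{\mathbf t},u_{\mathbf t}),G(U))<\varepsilon/3$. The base case $n=1$ is immediate since $(x_1,u_1)\in G(V)\subset G(U)$; the inductive step is verbatim that of Lemma \ref{L_app}, observing that the intermediate point $(x,u)$ picked from $G(U)$ need not lie in $G(V)$ — we only ever invoke the weak-approximability chord estimate on pairs drawn from $V$, and the remaining combination is handled by the single application of the defining property to the pair $(z_{\mathbf t'},u_{\mathbf t'})$-witness and $(x_n,u_n)$, both of which come from $G(V)$. (One should halve the tolerance through the induction exactly as before; I will phrase the weak-approximability application at $\varepsilon/3$ and let the induction eat powers of $2$, or equivalently quote the extended Lemma \ref{L_app} analogue directly.)

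Finally I would carry out the argument of Theorem \ref{Kak}: let $g$ be the uniformly continuous affine function on (the convex hull of) $S'$ taking the value $u_i$ at $x_i$, apply the approximate Brouwer fixed point theorem to get $y\in S$ with $\rho(y,g(y))<\varepsilon/3$, then use the finite-combination estimate from the previous paragraph — $g(y)$ is by construction a convex combination $u_{\mathbf t}$ of the $u_i$ with $z_{\mathbf t}=y$, and the $x_i$ involved lie in $S'$ within $\delta$ of each other after restricting attention to those near $y$ — to find $(x,u)\in G(U)$ with $\rho((y,g(y)),(x,u))<\varepsilon/3$, and conclude $\rho(x,U(x))\le\rho(x,u)\le\rho(x,y)+\rho(y,g(y))+\rho(g(y),u)<\varepsilon$.

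The main obstacle, and the only place the proof genuinely differs from Theorem \ref{Kak}, is bookkeeping around the domain mismatch: $U$ is defined on $S$ but the chord estimates only hold for points of the distinguished approximation $S'$ and only for values drawn from $V$, not arbitrary values of $U$. One must check that the affine interpolant $g$ can be built purely from the data $\{(x_i,u_i)\}\subset G(V)$, that Brouwer's approximate theorem still applies (it does — $g$ is affine on a totally bounded set with convex closure, which is all the approximate Brouwer theorem needs), and that when we expand $g(y)$ as a convex combination the active vertices are within $\delta$ of one another; the latter follows because $S'$ is a $\delta/2$-approximation and the simplicial structure of an affine interpolant localises $y$ to a simplex of diameter $<\delta$ provided the mesh is chosen accordingly — which is exactly why we take $\delta<\varepsilon/3$ and build the interpolant over the $\delta/2$-net. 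Everything else is a transcription of the earlier proofs.
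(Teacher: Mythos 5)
Your overall plan---rerun Lemma~\ref{L_app} and Theorem~\ref{Kak} with the data supplied by the weak approximability hypothesis in place of the uniform $\delta$---is exactly what the paper intends; the paper gives no written proof, only the remark that the earlier arguments ``readily extend.'' So the strategy is right. But what you label the ``main obstacle'' and then wave away as bookkeeping is in fact a genuine gap, and your own text contains the contradiction. In the inductive step of the Lemma~\ref{L_app} analogue, you must apply the weak-approximability chord estimate to the pair consisting of the witness $(x,u)$ for $\rho((z_{\mathbf t'},u_{\mathbf t'}),G(U))<\varepsilon/2$ and the point $(x_n,u_n)\in G(V)$. You first observe, correctly, that ``the intermediate point $(x,u)$ picked from $G(U)$ need not lie in $G(V)$,'' and then in the same sentence assert that the chord estimate is applied to a pair ``both of which come from $G(V)$.'' These two claims cannot both be true. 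The inductive hypothesis only furnishes closeness of $(z_{\mathbf t'},u_{\mathbf t'})$ to $G(U)$, not to $G(V)$, so the first coordinate of $(x,u)$ is in $S$ rather than the finite approximation $S'$, and its second coordinate lies in $U$ rather than $V$; the weak-approximability estimate, which is hypothesised only for pairs with first coordinates in $S'$ and second coordinates drawn from $V$, therefore does not apply.

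This distinction is precisely what separates approximability from weak approximability: in the approximable case the chord bound is available for every pair of nearby points of $G(U)$, so the intermediate point being merely in $G(U)$ is harmless, and the induction (modulo tolerance bookkeeping the paper also glosses over) goes through. In the weak case you need either to strengthen the inductive hypothesis so that $(z_{\mathbf t'},u_{\mathbf t'})$ is close to $G(V)$---which the chord estimate does not give, since it only returns points of $G(U)$---or to find a way of decomposing the $k$-point convex combination into iterated two-point chords whose endpoints all lie in $G(V)$, which is not obvious and which your proposal does not supply. As written, the inductive step is not justified, and this is the one place where a real idea beyond ``transcription'' is required.
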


\section{An application}

\bigskip
\noindent
In \cite{kak}, Kakutani presented his fixed point theorem and used it to give a simple proof of von Neumann's minimax theorem, which guarantees the existence of saddle points for particular functions:

\begin{theorem}
\label{gvN}
 Let $f:[0,1]^n\times [0,1]^m\rightarrow\mathbf{R}$ be a continuous function such that for each $x_0,y_0\in [0,1]$ and each real number $r$  the sets
 \begin{eqnarray*}
 \left\{y\in L:f(x_0,y)\leqslant r\right\}& \mbox{and}\\
 \left\{x\in L:f(x,y_0)\geqslant r\right\}
 \end{eqnarray*}
\noindent
are convex. Then
 $$
  \sup_{x\in [0,1]}\inf_{y\in [0,1]} f(x,y)=\inf_{y\in [0,1]}\sup_{x\in [0,1]}f(x,y).
 $$
\end{theorem}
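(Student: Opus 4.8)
The plan is to follow Kakutani's original strategy, translating it into the constructive setting using Theorem \ref{Kak2} in place of the classical Kakutani fixed point theorem. First I would show that the two quantities are related by the trivial inequality $\sup_x\inf_y f\leqslant\inf_y\sup_x f$, which holds constructively by a direct argument (for any $x_0,y_0$ we have $\inf_y f(x_0,y)\leqslant f(x_0,y_0)\leqslant\sup_x f(x,y_0)$, and then take the supremum over $x_0$ on the left and infimum over $y_0$ on the right; the relevant suprema and infima exist because $f$ is uniformly continuous on the compact set $[0,1]^n\times[0,1]^m$). The work is therefore entirely in the reverse inequality.

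For the reverse inequality I would define a set valued mapping $U$ on the convex compact set $S=[0,1]^n\times[0,1]^m$ by
 $$
  U(x,y)=\Bigl\{x^\prime\in[0,1]^n:f(x^\prime,y)=\textstyle\max_{\xi}f(\xi,y)\Bigr\}\times\Bigl\{y^\prime\in[0,1]^m:f(x,y^\prime)=\textstyle\min_{\eta}f(x,\eta)\Bigr\},
 $$
that is, the set of best responses of each player. Each $U(x,y)$ is a product of two convex sets (by the convexity hypothesis on the sublevel and superlevel sets), hence convex; it is nonempty since the maximum and minimum of a uniformly continuous function on a compact set are attained at least approximately, and one defines $U$ from its graph to avoid locatedness problems, exactly as in the discussion preceding Theorem \ref{Kak}. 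The key technical step is to verify that $U$ is weakly approximable: given $\varepsilon>0$, uniform continuity of $f$ supplies a $\delta$ such that if $(x,y)$ and $(x^\prime,y^\prime)$ are within $\delta$ then the respective best-response sets are within $\varepsilon$ of each other, so that convex combinations of points of the graph lie within $\varepsilon$ of the graph; a dense (or finely approximating) subset $S^\prime$ and a selection $V$ picking, say, near-optimal responses at the sample points gives the required data. I would then apply Theorem \ref{Kak2} to produce, for each $n$, a point $(x_n,y_n)$ with $\rho((x_n,y_n),U(x_n,y_n))<1/n$; unwinding the definition of $U$, this says $x_n$ is a near-best response to $y_n$ and $y_n$ a near-best response to $x_n$, i.e. $(x_n,y_n)$ is an approximate saddle point. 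From $f(x_n,y_n)\approx\max_\xi f(\xi,y_n)\geqslant\sup_y\inf_x\ldots$ wait---rather, from the approximate saddle inequalities one gets $\sup_x\inf_y f(x,y)\geqslant\inf_y f(x_n,y)\geqslant f(x_n,y_n)-1/n$ and $\inf_y\sup_x f(x,y)\leqslant\sup_x f(x,y_n)\leqslant f(x_n,y_n)+1/n$, whence $\inf_y\sup_x f-\sup_x\inf_y f\leqslant 2/n$ for all $n$, giving the reverse inequality.

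The main obstacle I expect is the verification that $U$ is weakly approximable, and in particular handling the fact that the best-response sets need not be located and the argmax/argmin need not be computable exactly: one must work with $\varepsilon$-optimal responses throughout and be careful that the selection function $V$ defined on the finite approximating set $S^\prime$ has its graph genuinely inside $G(U)$ while still enjoying the required $\varepsilon$-coherence under convex combinations. A secondary point requiring care is that $G(U)$ as I have written it uses exact equalities ($f(x^\prime,y)=\max f$), which constructively defines $U$ only on a possibly proper subset of $S$; the fix, as in the paragraph before Theorem \ref{Kak}, is to specify $U$ via its graph $G=\{((x,y),(x^\prime,y^\prime)):f(x^\prime,y)\geqslant f(\xi,y)\text{ and }f(x,y^\prime)\leqslant f(x,\eta)\text{ for all }\xi,\eta\}$ and set $U(x,y)=\{(x^\prime,y^\prime):((x,y),(x^\prime,y^\prime))\in G\}$, which is nonempty (though perhaps uninhabited) for every $(x,y)\in S$. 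Everything else---the trivial inequality, the extraction of the limit, the final estimate---is routine.
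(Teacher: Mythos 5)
Your overall plan---use Theorem \ref{Kak2} to extract approximate saddle points and let the accuracy tend to zero---is the right shape, and your handling of the trivial inequality and the final estimate match the paper. But the central technical step, the choice of the set valued mapping and the verification of weak approximability, has a genuine gap that you partially recognise but do not resolve.

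The problem is that you work with the \emph{exact} best-response correspondence $U$. Weak approximability requires a selection $V$ on a finite (or dense) sample set with $G(V)\subset G(U)$; for your $U$ this means computing exact maximisers and minimisers, which is not constructively possible (the infimum of a uniformly continuous function on $[0,1]^m$ exists but need not be attained, and when it is the argmin is not computable). Working with ``$\varepsilon$-optimal responses throughout,'' as you suggest, puts $G(V)$ strictly \emph{outside} $G(U)$, which breaks the hypotheses of Theorem \ref{Kak2}. Moreover your justification of weak approximability---that uniform continuity of $f$ makes the best-response sets ``within $\varepsilon$ of each other'' for nearby inputs---is false as stated: the best-response correspondence is upper but not lower hemi-continuous, so it is not Hausdorff-continuous (a bilinear $f$ already gives best-response sets jumping by the full diameter of the cube as the parameter crosses a critical value). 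What the paper does instead is to replace the exact best-response mapping by the explicitly $\varepsilon$-slack sets
$V_\varepsilon=\{(x_0,y_0):f(x_0,y_0)\leqslant\inf_y f(x_0,y)+\varepsilon\}$ and $W_\varepsilon=\{(x_0,y_0):f(x_0,y_0)\geqslant\sup_x f(x,y_0)-\varepsilon\}$, and to prove (Lemma \ref{L_minimax2}) that $V_\varepsilon$ is weakly approximable \emph{with respect to} $V_{\varepsilon/2}$. Here the nesting $V_{\varepsilon/2}\subset V_\varepsilon$ provides the required selection with graph inside the graph of the target mapping, uniform continuity converts the slack $\varepsilon/2$ into a modulus $\delta$ with $(V_{\varepsilon/2})_\delta\subset V_\varepsilon$, and convexity of the slices $V_\varepsilon(\overline{x})$ does the rest. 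Note also that the paper's $U=W_{\varepsilon/2}\times V_{\varepsilon/2}$ therefore \emph{depends on} $\varepsilon$, whereas you apply Theorem \ref{Kak2} to a single fixed $U$ with $\varepsilon=1/n$; once $U$ is allowed to vary with $\varepsilon$ the ``weakly approximable with respect to'' notion becomes exactly what you need. Without this relaxation your proof does not go through constructively.
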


\bigskip
\noindent
Throughout this section we fix a uniformly continuous function $f:[0,1]^n\times [0,1]^m\rightarrow\mathbf{R}$ satisfying the conditions of Theorem \ref{gvN}, and for each $\varepsilon>0$ we set
 \begin{eqnarray*}
  V_\varepsilon & = & \left\{(x_0,y_0)\in[0,1]^n\times[0,1]^m:f(x_0,y_0)\leqslant\inf_{y\in [0,1]}f(x_0,y)+\varepsilon\right\};\\
  W_\varepsilon & = & \left\{(x_0,y_0)\in[0,1]^n\times[0,1]^m:f(x_0,y_0)\geqslant\sup_{x\in[0,1]}f(x,y_0)-\varepsilon\right\}.
 \end{eqnarray*}
\noindent
In order to prove the minimax theorem, we extend, in the obvious way, the definition of approximable, weakly approximable with respect to, and weakly approximable to functions which take points from a metric space $X$ to subsets of a second metric space $Y$; we call such a function a \emph{set valued mapping from }$X$\emph{ into }$Y$. We associate $V_\varepsilon$ and $W_\varepsilon$ with the set valued mappings given by 
 $$
  V_\varepsilon(x)=\{y\in[0,1]^n:(x,y)\in V\}\mbox{ and }W_\varepsilon(y)=\{x\in[0,1]^n:(y,x)\in W\};
 $$
\noindent
note that $V_\varepsilon,W_\varepsilon$ are convex valued. Let $U_i$ be a set valued mapping from $X_i$ into $Y_i$ ($i=1,2$). The \emph{product} of $U_1$ and $U_2$, written $U_1\times U_2$, is the set valued mapping from $X_1\times X_2$ to $Y_1\times Y_2$ given by
 $$
  U_1\times U_2(x_1,x_2)=U_1(x_1)\times U_2(x_2).
 $$
\noindent
We omit the straightforward proof of the next lemma.

\begin{lemma}
\label{L_minimax1}
Let $U_i$ be a set valued mapping from $X_i$ into $Y_i$ ($i=1,2$). If $U_1,U_2$ are (weakly) approximable, then $U_1\times U_2$ is (weakly) approximable, and if $U_1,U_2$ are weakly approximable with respect to $V_1,V_2$ respectively, then $U_1\times U_2$ is weakly approximable with respect to $V_1\times V_2$.
\end{lemma}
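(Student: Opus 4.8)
The plan is to verify the three closure properties in sequence, each time unwinding the product metric on $X_1\times X_2$ (and on $Y_1\times Y_2$) into the coordinate metrics on the two factors. Throughout I would use that, for the standard product metric (say the max metric, or any equivalent one), $\|(x_1,x_2)-(x_1',x_2')\| < \delta$ holds iff $\|x_1-x_1'\|<\delta$ and $\|x_2-x_2'\|<\delta$, and that a point $((z_1,z_2),(u_1,u_2))$ lies within $\varepsilon$ of $G(U_1\times U_2)$ precisely when there are $(x_i,u_i^\ast)\in G(U_i)$ with $(z_i,u_i^\ast)$ within $\varepsilon$ of $(x_i,u_i^\ast)$ in each coordinate; so $\rho\big(((z_1,z_2),(u_1,u_2)),G(U_1\times U_2)\big) < \varepsilon$ follows once $\rho\big((z_i,u_i),G(U_i)\big) < \varepsilon$ for $i=1,2$. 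The key point making the argument go through is that the convex combination operations commute with projection to a factor: if $z=t(x_1,x_2)+(1-t)(x_1',x_2')$ then the $i$-th coordinate of $z$ is exactly $tx_i+(1-t)x_i'$, and similarly for the $u$-combinations, so the witnesses produced by approximability of $U_1$ and $U_2$ at the same parameter $t$ assemble into a witness for the product.

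Concretely, for the approximable case: given $\varepsilon>0$, choose $\delta_i>0$ witnessing approximability of $U_i$ for this $\varepsilon$, and set $\delta=\min\{\delta_1,\delta_2\}$. Given $(x_1,x_2),(x_1',x_2')$ within $\delta$, $(u_1,u_2)\in U_1(x_1)\times U_2(x_2)$, $(u_1',u_2')\in U_1(x_1')\times U_2(x_2')$, and $t\in[0,1]$, apply approximability of each $U_i$ with the same $t$ to get points of $G(U_i)$ within $\varepsilon$ of $(z_i,(u_i)_t)$; pairing them coordinatewise gives a point of $G(U_1\times U_2)$ within $\varepsilon$ of $(z_{\mathbf{t}},u_{\mathbf{t}})$ (using the max metric, so the two coordinate errors do not add). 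The weakly approximable case is the same but bookkeeping-heavier: from the data $(\delta_i, S_i, V_i)$ witnessing weak approximability of $U_i$ for $\varepsilon$, take $\delta=\min\{\delta_1,\delta_2\}$, $S=S_1\times S_2$ (which is a $\delta/2$-approximation of $X_1\times X_2$ in the max metric since each $S_i$ is a $\delta_i/2$-approximation and $\delta/2\le\delta_i/2$), and $V=V_1\times V_2$, whose graph sits inside $G(U_1)\times G(U_2)=G(U_1\times U_2)$ since $G(V_i)\subset G(U_i)$; then repeat the coordinatewise combination argument. For ``weakly approximable with respect to $V_i$'', the only extra observation is that when the $V_i$ are fixed independent of $\varepsilon$, so is $V_1\times V_2$, and $S_1\times S_2$ is dense when each $S_i$ is.

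The one genuine subtlety — and the reason the proof is ``straightforward'' rather than trivial — is the metric choice. With the Euclidean product metric $\rho((a,b),(a',b'))^2=\rho(a,a')^2+\rho(b,b')^2$ the two coordinate errors combine as $\sqrt{\varepsilon^2+\varepsilon^2}=\sqrt2\,\varepsilon$, so one must instead run each factor's approximability at $\varepsilon/\sqrt2$ (or, more cleanly, phrase everything with the max metric on products, under which the errors genuinely do not add and a $\delta$-ball in the product is exactly a product of $\delta$-balls). Since in $\mathbf R^n$ all these metrics are uniformly equivalent, this changes nothing essential, but it is the point at which one must be slightly careful; once a metric convention on products is fixed, the rest is the mechanical coordinatewise verification sketched above, which is why I would, as the paper does, be content to omit the full details.
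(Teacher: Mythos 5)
The paper offers no proof of this lemma --- it explicitly says ``We omit the straightforward proof of the next lemma'' --- so there is nothing to compare against; your task was to supply the omitted argument. Your treatment of the plain approximable case and of the ``weakly approximable with respect to $V_1,V_2$'' case is correct: coordinatewise application of the factor hypotheses at the same $t$, together with the observation that convex combinations commute with projection, does the job, and your remark about the choice of product metric (max vs.\ Euclidean, harmless but worth fixing a convention) is the right thing to note. In the ``with respect to'' case the extra requirement on $S$ is also fine, since a product of dense sets is dense.

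There is, however, a real slip in your handling of the plain weakly approximable case. You take $\delta=\min\{\delta_1,\delta_2\}$ and assert that $S_1\times S_2$ is a $\delta/2$-approximation of $X_1\times X_2$ ``since each $S_i$ is a $\delta_i/2$-approximation and $\delta/2\le\delta_i/2$.'' That inference runs the wrong way: a $\delta_i/2$-approximation is automatically an $\eta$-approximation for every $\eta\ge\delta_i/2$, not for $\eta<\delta_i/2$. With the max metric, $S_1\times S_2$ is only guaranteed to be a $\bigl(\max\{\delta_1,\delta_2\}/2\bigr)$-approximation. But if you instead set $\delta=\max\{\delta_1,\delta_2\}$ to make the approximation requirement go through, the hypothesis ``$\Vert x-x^\prime\Vert<\delta$'' no longer implies ``$\Vert x_i-x_i^\prime\Vert<\delta_i$'' for the factor whose $\delta_i$ is smaller, so you cannot invoke that factor's defining condition. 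The two constraints pull $\delta$ in opposite directions, and there is no way to reconcile them simply by taking a min or a max of the given $\delta_1,\delta_2$; one has to do some bootstrapping (for example, rerun the weak approximability of the factor with the larger $\delta_i$ at a smaller $\varepsilon$, and then handle the resulting asymmetry in the approximations of $S_1$ and $S_2$), or refine one of the nets $S_i$ while still keeping $V_i$ defined on it, and you have not supplied that step. This does not touch the ``with respect to'' clause, which is the one the paper actually uses, but as written your argument for the unqualified weakly approximable clause does not close.
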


\begin{lemma}
\label{L_minimax2}
For each $\varepsilon>0$, $V_\varepsilon$ is weakly approximable with respect to $V_{\varepsilon/2}$ and $W_\varepsilon$ is weakly approximable with respect to $W_{\varepsilon/2}$. 
\end{lemma}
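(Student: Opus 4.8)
The plan is to prove the statement for $V_\varepsilon$; the argument for $W_\varepsilon$ is symmetric (swap the roles of the two factors and reverse the inequality). Recall that weak approximability with respect to a fixed map $V$ requires, for each $\varepsilon>0$, a $\delta<\varepsilon$, a $\delta/2$-approximation $S$ of the domain, and a restriction of the chosen map $V$ to $S$ whose graph-convex-combinations stay within $\varepsilon$ of $G(V_\varepsilon)$. So the first step is to take $V = V_{\varepsilon/2}$ (viewed as a set valued mapping $x\mapsto V_{\varepsilon/2}(x)$), note that $G(V_{\varepsilon/2})\subset G(V_\varepsilon)$ since $f(x_0,y_0)\leqslant\inf_y f(x_0,y)+\varepsilon/2$ implies $f(x_0,y_0)\leqslant\inf_y f(x_0,y)+\varepsilon$, and then exhibit, for each target accuracy, a suitable $\delta$ and a $\delta/2$-approximation $S$ of $[0,1]^n$.

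The second and central step is the choice of $\delta$. Here I would use the uniform continuity of $f$: given the target accuracy $\eta>0$ (playing the role of ``$\varepsilon$'' in the definition of weak approximability — I will keep the notation $\varepsilon$ for the index and introduce a fresh $\eta$ for the approximation parameter, adjusting the statement's bookkeeping accordingly), pick $\delta<\eta$ so small that $|f(x,y)-f(x',y)|<\varepsilon/4$ whenever $\|x-x'\|<\delta$, uniformly in $y$; this also forces $|\inf_y f(x,y)-\inf_y f(x',y)|<\varepsilon/4$. Now take $S$ to be any discrete $\delta/2$-approximation of $[0,1]^n$ (a fine enough grid), and for $x\in S$ use $V_{\varepsilon/2}(x)$. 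The key computation: if $x,x'\in S$ with $\|x-x'\|<\delta$, $u\in V_{\varepsilon/2}(x)$, $u'\in V_{\varepsilon/2}(x')$, $t\in[0,1]$, set $z_t=tx+(1-t)x'$ and $u_t=tu+(1-t)u'$. One shows $(z_t,u_t)$ is within $\eta$ of some point of $G(V_\varepsilon)$: because the slice $\{y:f(z_t,y)\leqslant\inf_y f(z_t,y)+\varepsilon\}$ is convex (hypothesis of Theorem \ref{gvN}) and contains both $u$ and $u'$ up to the $\varepsilon/4 + \varepsilon/4 < \varepsilon/2$ error absorbed by the $\varepsilon/2$-slack in $V_{\varepsilon/2}$ versus $V_\varepsilon$, it follows that $u_t$ itself lies in $V_\varepsilon(z_t)$, i.e.\ $(z_t,u_t)\in G(V_\varepsilon)$ and the distance is in fact $0<\eta$.

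The main obstacle is making the convexity argument quantitatively airtight in the constructive setting: one must check that the $\varepsilon/2$-margin built into $V_{\varepsilon/2}$ genuinely absorbs the two $\varepsilon/4$ perturbations coming from moving the basepoint from $x$ (resp.\ $x'$) to $z_t$, so that $u$ and $u'$ both land in the $\varepsilon$-slice over $z_t$; then convexity of that slice gives $u_t$ in it directly, with no residual error. A secondary point is to verify that the auxiliary infima $\inf_{y}f(z_t,y)$ exist and are uniformly continuous in the basepoint — this follows from uniform continuity of $f$ together with the fact that infima over a compact (totally bounded, complete) interval of a uniformly continuous function exist constructively. Once these are in hand, the three bullet requirements in the definition of weak approximability with respect to $V_{\varepsilon/2}$ are satisfied, and the symmetric argument handles $W_\varepsilon$ with $W_{\varepsilon/2}$.
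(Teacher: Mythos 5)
Your proof is correct and follows essentially the same route as the paper's: use uniform continuity of $f$ to choose $\delta$ so that shifting a point of $V_{\varepsilon/2}$ by less than $\delta$ lands it in $V_\varepsilon$, then invoke convexity of the slice to absorb the convex combination. The one (minor, favourable) difference is that you centre the convexity argument at $z_t$ itself, showing $u_t\in V_\varepsilon(z_t)$ and hence $(z_t,u_t)\in G(V_\varepsilon)$ with distance exactly $0$, whereas the paper centres at $\overline{x}=(x+x')/2$ and settles for $\rho((z_t,u_t),G(V_\varepsilon))<\delta$; your version gives the slightly sharper conclusion but uses the same ingredients.
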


\begin{proof}
We only give the proof for $V_\varepsilon$; the proof for $W_\varepsilon$ is entirely analogous. Since $f$ is uniformly continuous, there exists $\delta>0$ such that $(V_{\varepsilon/2})_\delta$ is contained in $V_\varepsilon$. Let $x,x^\prime$ be points of $[0,1]^n$ such that $\Vert x-x^\prime\Vert<\delta$ and fix $y,y^\prime$ such that $(x,y),(x^\prime,y^\prime)\in V_{\varepsilon/2}$. Then $(\overline{x},y),(\overline{x},y)\in V_\varepsilon$, where $\overline{x}=(x+x^\prime)/2$. Since $V_\varepsilon(\overline{x})$ is convex valued, $ty+(1-t)y^\prime\in V_\varepsilon$ for each $t\in [0,1]$; whence
 \begin{eqnarray*}
  \rho((\mathbf{z}_t,\mathbf{u}_t),G(u))& \leqslant & \rho((\mathbf{z}_t,\mathbf{u}_t),\{\overline{x}\}\times V_\varepsilon(\overline{x}))\\
  & = & \rho(tx+(1-t)x^\prime,\overline{x}) \ < \ \delta.
 \end{eqnarray*}
\noindent
Since $\delta$ can be chosen to be arbitrarily small, this completes the proof.
\end{proof}

\bigskip
\noindent
We now have the \textbf{proof of Theorem \ref{gvN}}:

\bigskip
\begin{proof}
Let $f:[0,1]\times [0,1]\rightarrow \mathbf{R}$ be as in the statement of the theorem. It is easy to see that 
 $$
  \sup_{x\in [0,1]}\inf_{y\in [0,1]} f(x,y)\leqslant\inf_{y\in [0,1]}\sup_{x\in [0,1]}f(x,y).
 $$
\noindent
Fix $\varepsilon>0$ and let $\delta>0$ be such that $\Vert f(x,y)-f(x^\prime,y^\prime)\Vert<\varepsilon/4$ whenever $\Vert (x,y)-(x^\prime,y^\prime)\Vert<\delta$. By Lemmas \ref{L_minimax1} and \ref{L_minimax2} the set valued mapping $U$ on $[0,1]^{n+m}$ given by
 $$
  U=W_{\varepsilon/2}\times V_{\varepsilon/2}
 $$
\noindent
is approximable with respect to $W_{\varepsilon/4}\times V_{\varepsilon/4}$. By Theorem \ref{Kak2}, there exists $(x_0,y_0)\in[0,1]^{n+m}$ such that $\rho((x_0,y_0),U(x_0,y_0))<\delta$. It follows from the definition of $U$ and our choice of $\delta$, that 
\begin{eqnarray*}
  f(x_0,y_0) & < & \inf_{y\in[0,1]}f(x_0,y)+\varepsilon,\mbox{ and}\\
  f(x_0,y_0) & > & \sup_{x\in[0,1]}f(x,y_0)-\varepsilon.
 \end{eqnarray*}
\noindent
Hence
 \begin{eqnarray*}
  \inf_{y\in [0,1]}\sup_{x\in [0,1]}f(x,y) & \leqslant & \sup_{x\in [0,1]} f(x_0,y_0)\\
    & < & f(x_0,y_0)+\varepsilon\\
    & < & \inf_{y\in[0,1]}f(x_0,y)+2\varepsilon\\ 
    & \leqslant & \sup_{x\in [0,1]}\inf_{y\in [0,1]} f(x,y)+2\varepsilon.
 \end{eqnarray*}
\noindent
Since $\varepsilon>0$ is arbitrary, this completes the proof.
\end{proof}

\end{document}